\newcommand{\Qp}{\mathbb{Q}_p}
\newcommand{\Zp}{\mathbb{Z}_p}
\newcommand{\Zpex}{\mathbb{Z}_{p,exp}}
\newcommand{\Cp}{\mathbb{C}_p}
\newcommand{\Q}{\mathbb{Q}}
\newcommand{\R}{\mathbb{R}}
\newcommand{\C}{\mathbb{C}}
\newcommand{\Z}{\mathbb{Z}}
\newcommand{\N}{\mathbb{N}}
\newcommand{\Lexp}{\mathcal{L}_{exp}}
\newcommand{\deriv}[2]{\frac{\partial{#1}}{\partial{#2}}}
\newtheorem{theorem}{Theorem}[section]
\newtheorem{prop}[theorem]{Proposition}
\newtheorem{algo}{Algorithm}
\newtheorem{lemma}[theorem]{Lemma}
\newtheorem{cor}[theorem]{Corollary}
\newtheorem{definition}[theorem]{Definition}
\newtheorem{claim}{Claim}
\newtheorem*{conj}{Conjecture}
\newtheorem*{algo*}{Algorithm}
\theoremstyle{remark}
\newenvironment{Remark}{\begin{trivlist}\item[\hskip \labelsep {\bfseries Remark.}]}{\end{trivlist}}
\newenvironment{notation}{\begin{trivlist}\item[\hskip \labelsep {\bfseries Notations.}]}{\end{trivlist}}
\newenvironment{MainTheorem}{\begin{trivlist}\item[\hskip \labelsep {\bfseries Main theorem.}]}{\end{trivlist}}
\title{$p$-adic exponential ring, $p$-adic Schanuel's conjecture and decidability}
\author{Nathana\"el Mariaule} 
\date{}
\begin{document}
\maketitle

\begin{abstract} Let $exp(x)$ be the power series $\sum x^n/n!$. Then $E_p(x):=exp(px)$ (if $p\not=2$, we set $E_2(x)=exp(4x)$) is well-defined on $\Zp$  and determines a structure of exponential ring. In this paper, we prove that if a $p$-adic version of Schanuel's conjecture is true then the theory of $(\Zp, +, \cdot, 0, 1, E_p)$ is decidable.

\end{abstract}

\section{Introduction}
Exponential rings and fields are well-established topics in model-theory. While the real exponential field $\R_{exp}$ and the complex exponential field $\C_{exp}$ have been studied by many people, there is pretty much nothing known about the $p$-adic numbers in the context of exponential fields.  Note that there is no clear structure of exponential field attached to $\Qp$. It is not hard to show that the natural exponential map $exp(x)$ determined by the power series $\sum x^n/n!$ is convergent iff $v_p(x)>1/(p-1)$. As there is no analytic continuation for analytic functions in $\Qp$, we cannot extend $exp(x)$ to a natural exponential function on $\Qp$. However, we can use $exp(x)$ to define a structure of exponential ring on $\Zp$: let $E_p$ be the map $x\longmapsto exp(px)$ (if $p\not=2$, we set $E_2(x)=exp(4x)$). Then $E_p$ is convergent on $\Zp$ and $(\Zp,+,\cdot , 0, 1, E_p)$ is an exponential ring i.e. $E_p$ is a morphism of group from $(\Zp,+,0)$ to $(\Zp^\times, \cdot, 1)$. Let us remark that the choice of $E_p$ is not really canonical: we could have taken $exp(tx)$ for any $t$ with positive valuation.

\par Let $\mathcal{L}_{exp}$ be the language $(+,\cdot, 0, 1, E_p)$ and $\Zpex$ be the $\mathcal{L}_{exp}$-structure with underlying set $\Zp$ and natural interpretation for the elements of the language. We call this stucture the $p$-adic exponential ring. $\Zpex$ can be thought as a $p$-adic equivalent to the structures $\C_{exp}$ and $\R_{exp}$ (or to be more precise we should consider these structure with the domain of the exponential map reduced to a compact set). Very few results are known about $\Zpex$. In \cite{Macintyre2}\cite{Macintyre3}, A. Macintyre investigates for the first time the stucture of $p$-adic exponential ring. He solves some decisions problem (decision problem for the positive universal part of the theory and for one variable existential sentences). In \cite{Macintyre4}, he also gives a result of model-completeness whose an effective version was proved in \cite{Mariaule1} and will be usefull for this paper.

\par In this paper, we consider the question of the full decidability of the theory of $\Zpex$. It is well-known that the theory of $\C_{exp}$ is undecidable. Indeed, using the kernel of the exponential map, one can define the structure $(\Z, + ,\cdot , 0, 1)$ inside $\C_{exp}$. On the other side, the decidability problem for the theory of $\R_{exp}$ remains open. In \cite{Macintyre-Wilkie}, A. Macintyre and A. Wilkie highlight the role of Schanuel's conjecture in this problem. More precisely, they prove that if Schanuel's conjecture is true then the theory of $\R_{exp}$ is decidable. The main result of this paper is along this line:
\begin{MainTheorem}Assume that the $p$-adic version of Schanuel's conjecture is true. Then the theory of the $p$-adic exponential ring is decidable.
\end{MainTheorem}

\par The outline of the proof is the same that in the real case: first we reduce the decidability of the full theory to the case of existential sentences. In the real case, it relies (among other things) on the effective model-completeness of the theory of the reals in the language of ordered fields with the restricted exponential function. In the $p$-adic case, it is not known if the theory of $\Zpex$ is model-complete. So, we will use the expansion of language $\mathcal{L}_{pEC}$ introduced in \cite{Macintyre4}. In this language the theory of $\Zp$  is effectively model-complete. We recall the definition of this language in the beginning of section \ref{Decidability of the LPEC-sentences}. While this language introduces new functions, it is not so different from the language $\mathcal{L}_{exp}$ from the point of view of the decision problem for existential sentences. Indeed, the new functions can be written as polynomial combination of exponentials in a suitable finite algebraic extension of $\Qp$. Therefore, in section \ref{Decidability of positive existential sentences}, we will show that one can solve the decision problem for the positive existential $\mathcal{L}_{exp}$-sentences. Then in section \ref{Decidability of the LPEC-sentences}, we will see how one can adapt the arguments to a general existential $\mathcal{L}_{pEC}$-sentence.

\par Let $\psi$ be an existential $\mathcal{L}_{exp}$-sentence. It is sufficient to find a procedure that will return true when $\psi$ is true in $\Zp$ (and may runs forever when $\psi$ is false as by effective model-completeness the negation of $\psi$ is also an existential $\mathcal{L}_{pEC}$-sentence). An easy case is when $\psi$ is a formula of the type
$$\exists x_1,\cdots, x_n P_1(\overline{x},E_p(\overline{x})) = \cdots = P_n(\overline{x},E_p(\overline{x}))= 0 \not= J(\overline{x}) $$
where $J$ denote the Jacobian of the system.

In the real case, one can check that such a system of $n$ exponential equations with $n$ variables has a solution in $\R$ using Newton algorithm. In our case, we will use an analytic version of Hensel's lemma.

\begin{theorem}[Analytic Hensel's lemma]\label{Hensel}
Let $f= (f_1 , \cdots ,f_t)$ be a system of elements in $\Zp\{X_1,\cdots , X_t\}$. Let $J_f(\overline{X})$ denote the Jacobian matrix of the system. Assume that there is $\overline{a}\in \Zp^t$ such that 
$$\mbox{det }J_f(\overline{a})\not=0\mbox{ and }v(f(\overline{a}))> 2 v(\mbox{det }J_f(\overline{a})) + r,$$
 where $r$ is any nonnegative integer. Then, there is a unique $\overline{b}\in \Zp^n$ such that 
 $$v(\overline{b}-\overline{a})>v(\mbox{det }J_f(\overline{a}))+r\mbox{ and }f(\overline{b})=0.$$
\end{theorem}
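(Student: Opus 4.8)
The plan is to run the $p$-adic Newton iteration $\overline{a}_{n+1} := \overline{a}_n - J_f(\overline{a}_n)^{-1} f(\overline{a}_n)$ starting from $\overline{a}_0 := \overline{a}$, and to show it converges quadratically to a root $\overline{b}$ satisfying the stated displacement bound; this is the classical proof of the multivariate Hensel lemma, the only point special to the analytic setting being a second-order Taylor estimate with integral remainder. Uniqueness is then a short separate argument. Throughout, for a tuple or a matrix over $\Zp$ I write $v(\cdot)$ for the minimum of the valuations of its entries, and I put $c := v(\det J_f(\overline{a})) \in \Z_{\ge 0}$ (finite since $J_f$ has entries in $\Zp\{X_1,\dots,X_t\}$ and $\det J_f(\overline{a})\ne 0$).

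First I would record the Taylor estimate. For $\overline{a}\in\Zp^t$ the shifted system $f(\overline{a}+\overline{Y})$ lies in $\Zp\{Y_1,\dots,Y_t\}$: this amounts to the integrality of the Hasse derivatives $\tfrac{1}{\beta!}\partial^{\beta}f$, which again lie in $\Zp\{X_1,\dots,X_t\}$ because $\tfrac{1}{\beta!}\partial^{\beta}X^{\alpha}=\binom{\alpha}{\beta}X^{\alpha-\beta}$ has integer coefficients. The constant term of $f(\overline{a}+\overline{Y})$ is $f(\overline{a})$ and its linear part is $J_f(\overline{a})\overline{Y}$, so the remainder $f(\overline{a}+\overline{Y})-f(\overline{a})-J_f(\overline{a})\overline{Y}$ has all monomials of total degree $\ge 2$ and coefficients in $\Zp$; evaluating at $\overline{y}\in\Zp^t$ gives
\[
v\bigl(f(\overline{a}+\overline{y})-f(\overline{a})-J_f(\overline{a})\,\overline{y}\bigr)\ \ge\ 2\,v(\overline{y}).
\]
The same reasoning applied to the entries of $J_f$, followed by column-multilinearity of the determinant, yields $v\bigl(\det J_f(\overline{a}+\overline{y})-\det J_f(\overline{a})\bigr)\ge v(\overline{y})$. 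Both estimates hold, of course, with $\overline{a}$ replaced by any point of $\Zp^t$.

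Next I would prove by induction on $n$ that: (i) $v(\det J_f(\overline{a}_n))=c$; (ii) $v(\overline{a}_{n+1}-\overline{a}_n)\ge v(f(\overline{a}_n))-c>c+r$ (so in particular $\overline{a}_{n+1}\in\Zp^t$); and (iii) $v(f(\overline{a}_{n+1}))\ge 2(v(f(\overline{a}_n))-c)>2c+r$. The bound on $J_f(\overline{a}_n)^{-1}$ needed in (ii) comes from Cramer's rule, which gives entries of valuation $\ge -c$ once (i) is known at stage $n$. For (iii) one substitutes $\overline{y}=\overline{a}_{n+1}-\overline{a}_n$ into the Taylor estimate and uses $f(\overline{a}_n)+J_f(\overline{a}_n)(\overline{a}_{n+1}-\overline{a}_n)=0$, so the left side is exactly $v(f(\overline{a}_{n+1}))$; then $v(f(\overline{a}_n))>2c+r$ forces $2(v(f(\overline{a}_n))-c)>2c+r$. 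For (i) at stage $n+1$ one combines the determinant estimate, $v(\overline{a}_{n+1}-\overline{a}_n)>c$, $v(\det J_f(\overline{a}_n))=c$, and the ultrametric inequality. Setting $k_n:=v(f(\overline{a}_n))$ and $\delta_n:=k_n-2c$, item (iii) says $\delta_{n+1}\ge 2\delta_n$, so $\delta_n\ge 2^n\delta_0\ge 2^n(r+1)\to\infty$; hence $k_n\to\infty$ and, by (ii), $v(\overline{a}_{n+1}-\overline{a}_n)\to\infty$. Thus $(\overline{a}_n)_n$ is Cauchy, converges by completeness of $\Zp$ to some $\overline{b}\in\Zp^t$, and $f(\overline{b})=\lim_n f(\overline{a}_n)=0$ by continuity of $f$ on $\Zp^t$. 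Finally, since $k_n$ is nondecreasing, $v(\overline{b}-\overline{a})\ge\min_n v(\overline{a}_{n+1}-\overline{a}_n)\ge k_0-c>c+r$.

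For uniqueness, suppose $\overline{b}'\in\Zp^t$ with $f(\overline{b}')=0$ and $v(\overline{b}'-\overline{a})>c+r$, and set $\overline{d}:=\overline{b}'-\overline{b}$, so $v(\overline{d})>c+r\ge c$. The determinant estimate (at $\overline{a}$, with displacement $\overline{b}-\overline{a}$) gives $v(\det J_f(\overline{b}))=c$, and the Taylor estimate at $\overline{b}$ with displacement $\overline{d}$, using $f(\overline{b}+\overline{d})=f(\overline{b})=0$, gives $v(J_f(\overline{b})\overline{d})\ge 2v(\overline{d})$, whence $v(\overline{d})\ge 2v(\overline{d})-c$, i.e. $v(\overline{d})\le c$ --- contradicting $v(\overline{d})>c$ unless $\overline{d}=0$. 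The step I expect to need the most care is the bookkeeping in the induction, precisely the fact that the \emph{strict} inequality $v(f(\overline{a}_n))>2c+r$ is reproduced at each stage (not merely its non-strict version), on which the whole convergence --- and the final displacement bound --- depends; the one genuinely analytic ingredient, beyond the polynomial Hensel lemma, is the integrality of the shifted series $f(\overline{a}+\overline{Y})\in\Zp\{Y_1,\dots,Y_t\}$, which is what makes the degree-$\ge 2$ remainder have coefficients of nonnegative valuation.
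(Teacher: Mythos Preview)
Your argument is correct: the Newton iteration with the second-order Taylor estimate and Cramer's rule yields the existence, the displacement bound, and the uniqueness exactly as you write. One minor remark: in the uniqueness step you pass from $v(J_f(\overline{b})\overline{d})\ge 2v(\overline{d})$ to $v(\overline{d})\ge 2v(\overline{d})-c$ by applying $J_f(\overline{b})^{-1}$; it may help the reader to make that multiplication explicit, but the estimate is right.

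As for comparison with the paper: there is nothing to compare. The paper states the Analytic Hensel's Lemma as a known tool in the introduction and never proves it; it is invoked later as a black box (in the algorithm after the statement and in the proof of Theorem~\ref{Wilkie thm 5.1}). Your Newton-iteration proof is the standard one, and it supplies what the paper simply assumes.
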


 Assume that the formula $\psi$ is realised in $\Zp$, one can check it using Hensel's lemma:
\begin{enumerate}
	\item Let $\overline{k}_1,\overline{k}_2,\cdots$ be an enumeration of $\Z^n$.
	\item For all $i$, check if $\mbox{det }J_f(\overline{k}_i)\not=0\mbox{ and }v(f(\overline{k}_i))> 2 v(\mbox{det }J_f(\overline{k}_i))$.
	\item If this is the case for some $i$, stops the procedure: the formula $\psi$ is satisfied in $\Zp^n$ by Hensel's lemma.
\end{enumerate}
Let us remark that whenever the formula $\psi$ is true, we can always find such a tuple $\overline{k}_i$ as $\Z^n$ is dense in $\Zp^n$.

\par We will reduce the general case to a procedure of this kind. More precisely, let $F_P(\overline{x})=P(\overline{X},E_p(\overline{x}))$ be an exponential polynomial with coefficients in $\Z$. Assume that the formula $\exists \overline{x}\ F_P(\overline{x})=0$ is realised in $\Zp$. Then in section \ref{desingularization}, we will prove a desingularization theorem i.e. that there are  exponential polynomials $P_1(\overline{x},E_p(\overline{x})), \cdots , P_n(\overline{x},E_p(\overline{x}))$ and
 a non-singular solution of the system which also a zero of $F_P$. While checking that the system has a non-singular solution is easy, it is not obvious that the solution we find is indeed the zero of $F_P$ given by the theorem. Furthermore, we don't know any effective procedure to compute $P_1(\overline{x},E_p(\overline{x})) = \cdots = P_n(\overline{x},E_p(\overline{x}))$. This is where we need a $p$-adic version of Schanuel's conjecture. We will see in section \ref{Decidability of the LPEC-sentences} that if we assume the $p$-adic Schanuel's conjecture, we can actually assume that $P$ is (roughly) in the ideal generated by $P_1,\cdots P_n$. This will allow us to overcome the issues mentionned and give a positive answer to the decision problem for existential $\mathcal{L}_{pEC}$-sentences.
  
\begin{notation} Let $K$ be a valued field. We denote its valuation ring by $\mathcal{O}_K$ and its maximal ideal by $\mathfrak{M}_K$. We will denote the $p$-adic valuation by $v$. We will denote the set of nonzero elements of a ring $A$ by $A^*$.
\end{notation}

\section{Desingularization theorem}\label{desingularization}

Let $F$ be a subset of $\Zp\{\overline{X}\}$. Let $\mathcal{L}_F$ be the language $(+,\cdot , 0, 1, P_n, f; n\in \N, f\in F)$ where $P_n$ is a unary predicate interpreted in $\Zp$ by the set of $n$th powers. We assume that the set of $\mathcal{L}_F$-terms is closed under derivation. The example of our interest is the case where $F$ is the set of trigonometric functions (in the sense of \cite{Macintyre4}, we recall the definition later in this paper) and $E_p$.
\par In this section, we consider a system of equations $f=(f_1,\cdots , f_n)$ where the $f_i's$ are $\mathcal{L}_F$-terms with $m$ variables. Assuming that the above system has a solution in $\Zp$, we want to show that there exists a system of $\mathcal{L}_F$-terms $g=(g_1,\cdots ,g_m)$ such that there is a non-singular zero of the system $g$ which is also a zero of the system $f$.  We will actually prove the result for all finite algebraic extensions of $\Qp$. This result is the $p$-adic version of theorem 5.1 in \cite{Wilkie}. We will work  with Noetherian differential rings like in \cite{Wilkie}. The outline of the proof is actually the same that in the real case.
\par Within this section, $K$ will denote a finite algebraic extension of $\Qp$. The implicit function theorem will play an important role in our proof. We state now this result in the $p$-adic context.

\par Let $f:U\times V\longrightarrow K^m$ be an analytic map (where $U\times V$ is an open subset of $K^n\times K^m$). Let $A=Df(\overline{a},\overline{b})$ be its differential at $(\overline{a},\overline{b})$. We set
$$A_x=\left(\begin{array}{rcl}
\frac{\partial f_1}{\partial x_1}(\overline{a},\overline{b})& \cdots &\frac{\partial f_1}{\partial x_n}(\overline{a},\overline{b})\\
\vdots & &\vdots\\
\frac{\partial f_m}{\partial x_1}(\overline{a},\overline{b})& \cdots &\frac{\partial x_n}{\partial y_m}(\overline{a},\overline{b})
\end{array}\right)
 \mbox{ and } 
 A_y=\left(\begin{array}{rcl}
\frac{\partial f_1}{\partial y_1}(\overline{a},\overline{b})& \cdots &\frac{\partial f_1}{\partial y_m}(\overline{a},\overline{b})\\
\vdots & &\vdots\\
\frac{\partial f_m}{\partial y_1}(\overline{a},\overline{b})& \cdots &\frac{\partial f_m}{\partial y_m}(\overline{a},\overline{b})
\end{array}\right)
 $$
\begin{theorem}[Implicit function theorem] Let $f:U\times V\rightarrow K^m$ be an analytic map (where $U\times V$ is an open subset of $K^n\times K^m$) such that $f(\overline{a},\overline{b})=0$ for some $(\overline{a},\overline{b})\in U\times V$. Let $A=Df(\overline{a},\overline{b})$. Assume $A_y$ invertible. Then, there exist $U_1\subset U$ and $U_2\subset V$, both open and containing $\overline{a}$ and $\overline{b}$ respectively, such that for all $\overline{x}\in U_1$ there is a unique $\overline{y}\in U_2$ with $f(\overline{x},\overline{y})=0$.
\par Furthermore, the map $g$ defined by $g(\overline{x})=\overline{y}$ from $U_1$ to $U_2$ is analytic and satisfies $g(\overline{a})=\overline{b}$, $f(\overline{x},g(\overline{x}))=0$ for all $\overline{x}\in U_1$ and $Dg(\overline{x})=-A_y^{-1}A_x$.
\end{theorem}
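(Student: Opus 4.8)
The plan is to deduce the statement from the analytic Hensel's lemma (Theorem~\ref{Hensel}), applied fibrewise in $\overline{x}$, together with a separate quantitative argument for the analyticity of the implicit function $g$. (One could equivalently go through a $p$-adic inverse function theorem by studying the map $(\overline{x},\overline{y})\mapsto(\overline{x},f(\overline{x},\overline{y}))$, but since only Theorem~\ref{Hensel} is available in the excerpt, the fibrewise route is the most economical.)

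\textbf{Set-up and reduction.} Since $K/\Qp$ is finite, $\mathcal{O}_K$ is a complete discrete valuation ring, and the obvious analogue of Theorem~\ref{Hensel} holds over $\mathcal{O}_K$ with the same proof (Newton iteration in the ultrametric setting). After an affine change of coordinates I may assume $\overline{a}=0$ and $\overline{b}=0$; after rescaling the variables by a suitable power of a uniformizer and clearing denominators in the coefficients of the power-series expansion of $f$ at the origin, I may assume $f\in\mathcal{O}_K\{\overline{X},\overline{Y}\}^m$ with $f(0,0)=0$, and that $A_y$ is invertible over $K$ with $d:=v(\det A_y)\ge 0$.

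\textbf{Existence and uniqueness of $g$.} Fix $\overline{x}$ with $v(\overline{x})$ large and consider the system $h_{\overline{x}}(\overline{Y}):=f(\overline{x},\overline{Y})\in\mathcal{O}_K\{\overline{Y}\}^m$ in the $m$ variables $\overline{Y}$. Its Jacobian at $\overline{Y}=0$ is $A_y$ at $(\overline{x},0)$; since $f$ is analytic and this Jacobian is $A_y$ at $(0,0)$, for $v(\overline{x})$ large enough its determinant still has valuation $d$, hence it is invertible. Moreover $v(h_{\overline{x}}(0))=v(f(\overline{x},0))\to\infty$ as $v(\overline{x})\to\infty$ because $f(0,0)=0$. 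Thus for $v(\overline{x})>c$, with $c$ depending only on $d$ and the coefficients of $f$, the hypotheses of Theorem~\ref{Hensel} (over $\mathcal{O}_K$, with $r=0$) are met, producing a unique $\overline{y}=g(\overline{x})$ with $v(\overline{y})>d$ and $f(\overline{x},\overline{y})=0$. Taking $U_1=\{v(\overline{x})>c\}\cap U$ and $U_2=\{v(\overline{y})>d\}\cap V$ yields the existence/uniqueness clause, and $g(0)=0$ since $\overline{y}=0$ already solves $h_0$.

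\textbf{Analyticity of $g$ and the derivative formula.} For analyticity I would first solve $f(\overline{X},\overline{Y})=0$ for $\overline{Y}$ as a formal power series $\hat g(\overline{X})\in K[[\overline{X}]]^m$ (possible since $A_y$ is invertible), and then bound the valuations of its coefficients using the majorant estimates for the Newton iterates $\overline{Y}_{k+1}=\overline{Y}_k-A_y^{-1}f(\overline{x},\overline{Y}_k)$ — which converge uniformly on a small ball, each iterate being analytic in $(\overline{x},\overline{Y})$ — to show that $\hat g$ converges on $U_1$ (possibly after shrinking $U_1$); by the uniqueness in the previous step $\hat g$ agrees there with $g$. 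Finally, differentiating the identity $f(\overline{x},g(\overline{x}))=0$ on $U_1$ via the chain rule gives $A_x+A_y\,Dg(\overline{x})=0$ with $A_x,A_y$ evaluated at $(\overline{x},g(\overline{x}))$; since $A_y$ is invertible on a neighbourhood of the origin, shrinking $U_1$ once more lets us invert it throughout $U_1$, so $Dg(\overline{x})=-A_y^{-1}A_x$, which at $\overline{x}=\overline{a}$ reduces to the value with $A_x,A_y$ read at $(\overline{a},\overline{b})$.

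\textbf{Main obstacle.} Everything except the analyticity of $g$ is bookkeeping with valuations; the delicate point is that in the ultrametric setting a uniform limit of analytic functions need not be analytic, so one really has to produce honest convergence bounds for the coefficients of the implicitly defined series. The cleanest way to do this is the majorant control of the Newton iterates, and this is precisely where the hypothesis that $f$ is analytic (a restricted power series after rescaling) with invertible $A_y$ enters quantitatively rather than merely formally.
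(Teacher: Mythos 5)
The paper does not prove this theorem: it is quoted as a standard fact of $p$-adic analysis (stated ``in the $p$-adic context'' and then used), so there is no in-paper argument to compare yours against. Judged on its own, your proof is correct and follows the standard route. The reductions (translation to the origin, rescaling by a uniformizer and clearing denominators so that $f$ becomes a restricted power series over $\mathcal{O}_K$, passing from $\Zp$ to $\mathcal{O}_K$ in Theorem~\ref{Hensel}) are all legitimate, and the fibrewise application of Hensel's lemma with $r=0$ does deliver existence and uniqueness of $\overline{y}=g(\overline{x})$ in the ball $v(\overline{y})>v(\det A_y)$, exactly matching the shape of $U_2$. You also correctly identify the one genuinely delicate point: over a non-archimedean field, pointwise or sup-norm convergence of the Newton iterates on $K$-rational points does not by itself give analyticity of the limit, so one must control the coefficients — equivalently, run the iteration $\overline{Y}_{k+1}=\overline{Y}_k-A_y^{-1}f(\overline{X},\overline{Y}_k)$ inside the Tate algebra $K\langle\overline{X}\rangle^m$ and check contraction in the Gauss norm, then identify the resulting restricted power series with the fibrewise $g$ via the uniqueness clause. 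That step is only sketched in your write-up, but the sketch is the right one and fills in routinely. Two small points to tidy if you expand this: (i) after rescaling, make sure the ball $\{v(\overline{y})>d\}$ actually sits inside $V$ so that uniqueness in $U_2$ is meaningful; (ii) the displayed formula $Dg(\overline{x})=-A_y^{-1}A_x$ in the statement is an abuse of notation (the matrices must be evaluated at $(\overline{x},g(\overline{x}))$, not at $(\overline{a},\overline{b})$), which you interpret correctly.
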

\par Let us remark that if the function $f$ and the open sets $U, V$ are definable, then so is the function $g$. Indeed, we can assume that the $U_i$'s are open balls and the function $g$ is determined by the relations $(\overline{x},g(\overline{x}))\in U_1\times U_2$ and $f(\overline{x},g(\overline{x}))=0$. Also, the derivatives of $g$ are definable via the relation $Dg(\overline{x})=-A_y^{-1}A_x$.

\par We are now given a system $f_1,\cdots ,f_n$ of $\mathcal{L}_F$-terms. We first observe that such a system can be reduced to a single equation in $K$ : indeed, as $v(\mathcal{O}_K)=\frac{1}{e}\Z$ for some $e\in \N$:
$$ \mbox{for all $x,y\in K$, }(x,y)=(0,0) \mbox{ iff } x^2+\pi y^2=0,$$
where $\pi$ is an element of minimal positive valuation.
So, we can consider systems with a single $\mathcal{L}_F$-term. We view $K$ as a $\mathcal{L}_F$-structure (where $f\in F$ is interpreted by the map restricted to the valuation ring, i.e. the interpretation of $f$ takes value $f(x)$ for $x\in \mathcal{O}_K$ and value $0$ for $x\notin \mathcal{O}_K$). We also add to the language $\mathcal{L}_F$  constant symbols for a basis of $K$ over $\Qp$ (such that this basis is also a basis of $\mathcal{O}_K$ over $\Zp$). We are interested by the local behaviour of the definable analytic maps (especially, in what happens in the valuation ring). We consider the ring of such maps where we identify two maps which coincide on a open set i.e. the ring of germs:
\begin{definition}  
Given a \emph{neighbourhood system} $\mathcal{N}$ in $K^n$ (i.e. a non-empty collection of non-empty open $\mathcal{L}_F$-definable subsets of $K^n$ closed under finite intersection), $\mathfrak{G}^{(n)}(\mathcal{N})^-$ is the set of all $\langle f,U\rangle $ where $U\in \mathcal{N}$ and $f:U\longrightarrow K$ is a $\mathcal{L}_F$-definable function such that $f$ is analytic on $U$.
\par We define an equivalence relation on $\mathfrak{G}^{(n)}(\mathcal{N})^-$ by:
$\langle f_1,U_1\rangle\sim \langle f_2,U_2\rangle$ iff $f_1$ and $f_2$ coincide on a neighbourhood i.e. there is $U\in \mathcal{N}$ such that $U\subseteq U_1\cap U_2$ and for all $x\in U$, $f_1(x)=f_2(x)$. We denote by $[f,U]$ the class of $\langle f,U\rangle$.
\par The \emph{ring of germs} is the set $\mathfrak{G}^{(n)}(\mathcal{N})=\mathfrak{G}^{(n)}(\mathcal{N})^-/\!\!\sim$ equipped with the natural operations of addition and multiplication.
\end{definition}
Let us remark that $ \mathfrak{G}^{(n)}(\mathcal{N})$ is a unital differential ring.

\par As a special case of neighbourhood system, we have the collection of all definable open neighbourhoods of a point $P$. We denote the ring of germs in this case by $\mathfrak{G}^{(n)}(P)$. Let $P\in K^l$ and $Q\in K^m$ and let $f_1,\cdots , f_m$ be analytic maps in $\mathfrak{G}^{(l+m)}(P,Q)$. Let $f=(f_1,\cdots , f_m)$. Assume that $f(P,Q)=0$ and det $J_f(P,Q)\not =0$ i.e. $f_i(P,Q)=0$ for all $i$ and 
$$
\mbox{det}
\left(
\begin{array}{ccc}
\frac{\partial f_1}{\partial x_{l+1}} (P,Q) & \cdots & \frac{\partial f_1}{\partial x_{l+m}} (P,Q)\\
\vdots & & \vdots\\
\frac{\partial f_m}{\partial x_{l+1}} (P,Q) & \cdots & \frac{\partial f_m}{\partial x_{l+m}} (P,Q)
\end{array}\right)
 \not=0,
$$  
i.e. $f_i(P,Q)=0$ for all $i$ and  the vectors 
$$\left(\frac{\partial f_1}{\partial x_{l+1}} (P,Q),  \cdots , \frac{\partial f_1}{\partial x_{l+m}} (P,Q)\right) , \cdots , \left(\frac{\partial f_m}{\partial x_{l+1}} (P,Q) , \cdots , \frac{\partial f_m}{\partial x_{l+m}} (P,Q)\right)$$
 are $K$-linearly independent. We denote these vectors by $d_{P,Q}f_i$. By the analytic implicit function theorem, there are $U_P\times U_Q\subset U$ and $\Phi' =(\Phi_{l+1},\cdots ,\Phi_{l+m})$ analytic from $U_P$ to $U_Q$ such that $f_i(\overline{x},\Phi_{l+1}(\overline{x}),\cdots ,\Phi_{l+m}(\overline{x}))=0$ for all $\overline{x}\in U_P$.
 As the $f_i$'s and $U$ are definable, this guarantees that the map $\Phi'$ is definable analytic. Therefore, $\Phi'$ determines a germ in $\mathfrak{G}^{(l)}(P)$.
\par Let $\Phi(\overline{x}) =(\Phi_1(\overline{x}),\cdots , \Phi_{l+m}(\overline{x}))$ where $\Phi_i(\overline{x})=x_i$ for $i\leq l$ and $\Phi_{l+i}$ as above. We denote the morphism of rings
$$
\begin{array}{rll}
\mathfrak{G}^{(l+m)}(P,Q) & \longrightarrow & \mathfrak{G}^{(l)}(P) \\
 {[f,U]} & \longmapsto & [f(\Phi),U]
\end{array}
$$ 
by $\widehat{\phantom{123}}$. The kernel of this map is the set of germs which vanish (locally) on the set of zeros of the system $(f_1,\cdots ,f_m)$ around $(P,Q)$. In particular, $\hat{f}_i\equiv 0$ (and therefore, $\deriv{\hat{f}_i}{x_j}\equiv 0$) in $\mathfrak{G}^{(l)}(P)$.
\begin{lemma}\label{Wilkie lemma 4.7} Let $f_1,\cdots ,f_m$, $(P,Q)$ as above.
For all $g\in \mathfrak{G}^{(l+m)}(P,Q)$,\\ $d_{P,Q}f_1,\cdots d_{P,Q}f_m, d_{P,Q}g$ are linearly independent over $K$ iff $d_P\hat{g}\not=0$.
\end{lemma}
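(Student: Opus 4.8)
Throughout I write $d_{P,Q}h=\bigl(\frac{\partial h}{\partial x_1}(P,Q),\dots,\frac{\partial h}{\partial x_{l+m}}(P,Q)\bigr)\in K^{l+m}$ for the gradient of a germ $h\in\mathfrak{G}^{(l+m)}(P,Q)$ and $d_P\hat g\in K^l$ for the gradient of $\hat g$ at $P$; the hypothesis $\det J_f(P,Q)\neq 0$ in particular makes $d_{P,Q}f_1,\dots,d_{P,Q}f_m$ linearly independent. The plan is to compute $d_P\hat g$ explicitly from the data of $g$ at $(P,Q)$ via the chain rule, and then to recognise the asserted equivalence as a rank statement for an $(m+1)\times(l+m)$ matrix.

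First I would differentiate $\hat g = g\circ\Phi$. Since $\Phi(\overline x)=(\overline x,\Phi'(\overline x))$, the Jacobian of $\Phi$ at $P$ is the $(l+m)\times l$ matrix obtained by placing $I_l$ on top of $D\Phi'(P)$, and the implicit function theorem gives $D\Phi'(P)=-A_y^{-1}A_x$, where $A_y=J_f(P,Q)$ is invertible by hypothesis and $A_x$ is the Jacobian of $f=(f_1,\dots,f_m)$ in the first $l$ variables at $(P,Q)$. Decomposing the gradient of $g$ at $(P,Q)$ into its first-$l$ and last-$m$ blocks, $Dg(P,Q)=(D_xg\mid D_yg)$, the chain rule gives
$$d_P\hat g \;=\; D_xg\;-\;D_yg\,A_y^{-1}A_x.$$
Taking $g=f_i$ here just re-derives $d_P\widehat{f_i}=0$, already noted before the lemma.

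It remains to turn linear (in)dependence into a rank computation. Stack $d_{P,Q}f_1,\dots,d_{P,Q}f_m,d_{P,Q}g$ as the rows of an $(m+1)\times(l+m)$ matrix $M$ and split its columns into a first-$l$ block and a last-$m$ block, so that $M=\bigl(\begin{smallmatrix}A_x&A_y\\ D_xg&D_yg\end{smallmatrix}\bigr)$; the tuple is $K$-linearly independent iff $M$ has rank $m+1$. Since $A_y$ is invertible, subtracting $D_yg\,A_y^{-1}$ times the block of the first $m$ rows from the last row of $M$ leaves the rank unchanged and replaces that last row by $\bigl(D_xg-D_yg\,A_y^{-1}A_x \,\bigm|\, 0\bigr)=(d_P\hat g\mid 0)$. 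The first $m$ rows of the new matrix span an $m$-dimensional space, and — again because $A_y$ is invertible — the only vector in that space whose last-$m$ block vanishes is $0$; hence $M$ has rank $m+1$ iff $d_P\hat g\neq 0$, which is the claimed equivalence. (Equivalently: $D_yg=\sum_i c_i\,D_yf_i$ has the unique solution $(c_1,\dots,c_m)=D_yg\,A_y^{-1}$, and that same tuple realises $D_xg=\sum_i c_i\,D_xf_i$ iff $d_P\hat g=0$.)

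There is no real obstacle here once the implicit function theorem and the analytic chain rule are in place, both of which are available in the $p$-adic analytic setting used throughout this section; the only points demanding care are the bookkeeping of the block decomposition into the first $l$ and last $m$ coordinates, and the observation that $\det J_f(P,Q)\neq 0$ is used twice — upstream to produce $\Phi$ and the homomorphism $\widehat{\phantom{x}}$, and here to guarantee that $A_y$ is invertible so that the row reduction (equivalently, the unique solvability of the coefficients $c_i$) is legitimate.
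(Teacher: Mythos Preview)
Your proof is correct: the chain-rule computation of $d_P\hat g = D_xg - D_yg\,A_y^{-1}A_x$ is right, and the row reduction on the stacked gradient matrix $M$ cleanly converts the rank-$(m+1)$ condition into $d_P\hat g\neq 0$. The paper does not actually spell out a proof but defers verbatim to Wilkie's Lemma~4.7; your argument is the standard one and is essentially what that reference contains.
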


The proof of this lemma is word to word the same that lemma 4.7 in \cite{Wilkie}. We fix now some notations: let $f_1,\cdots , f_m: U\longrightarrow K$ be analytic functions (where $U\subset K^n$ open). Then,
$$\begin{array}{l}
V(f_1,\cdots ,f_m)=\{P\in U\mid\ f_1(P)=\cdots =f_m(P)=0\},\\
V^{ns}(f_1,\cdots , f_m)=\{P\in V(f_1,\cdots ,f_m)\mid\ d_Pf_1,\cdots ,d_Pf_m\mbox{ are $K$-linearly independent}\}.
\end{array}
$$

\begin{prop}\label{Wilkie thm 4.9} Let $P\in K^n$ and let $M$ be a Noetherian subring of $\mathfrak{G}^{(n)}(P)$ closed under differentiation. Let $m\in \N$ and $[f_1,U_1],\cdots ,[f_m,U_m]\in M$. Assume $P\in V^{ns}(f_1,\cdots , f_m)$. Then, exactly one of the following is true:
\begin{enumerate}[(a)]
\item n=m; or,
\item m<n and for all $[h,W]\in M$ with $h(P)=0$, $h$ vanishes on $U\cap V^{ns}(f_1,\cdots , f_m)$ for some $U$ open neighbourhood of $P$; or,
\item m<n and for some $[h,W]\in M$, $P\in V^{ns}(f_1,\cdots , f_m,h)$.
\end{enumerate}
\end{prop}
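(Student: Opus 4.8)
The plan is to proceed by a case analysis driven by the geometry of the non‑singular zero set $V^{ns}(f_1,\dots,f_m)$ near $P$, exactly as in Wilkie's Theorem 4.9. First I would observe that the three alternatives (a), (b), (c) are mutually exclusive by construction: (a) is the case $n=m$, while (b) and (c) are contradictory statements about the same $M$ (either \emph{every} germ in $M$ vanishing at $P$ vanishes locally on $V^{ns}(f_1,\dots,f_m)$, or \emph{some} germ $h$ yields a strictly larger non‑singular system). So the content is to show that at least one holds. Assume $m<n$. If (b) holds we are done, so suppose (b) fails: there is $[h,W]\in M$ with $h(P)=0$ but $h$ not identically zero on any neighbourhood of $P$ inside $V^{ns}(f_1,\dots,f_m)$. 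The goal is then to upgrade this $h$ to one witnessing (c), i.e.\ with $d_Pf_1,\dots,d_Pf_m,d_Ph$ linearly independent over $K$.

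The key tool is Lemma~\ref{Wilkie lemma 4.7}, applied after passing through the implicit function map $\widehat{\phantom{123}}$. Here I would set up the coordinates so that $d_Pf_1,\dots,d_Pf_m$ are linearly independent (possible since $P\in V^{ns}(f_1,\dots,f_m)$): reorder variables so that the square submatrix of $\partial f_i/\partial x_{n-m+j}$ is invertible at $P$, split $\overline{x}=(\overline{x}',\overline{x}'')$ with $|\overline{x}''|=m$, write $P=(P',P'')$, and form the germ‑level retraction $\widehat{\phantom{123}}:\mathfrak{G}^{(n)}(P)\to\mathfrak{G}^{(n-m)}(P')$ obtained from the implicit function theorem solving $f_i=0$ for $\overline{x}''$ in terms of $\overline{x}'$. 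By Lemma~\ref{Wilkie lemma 4.7}, $d_Pf_1,\dots,d_Pf_m,d_Pg$ are $K$‑linearly independent iff $d_{P'}\hat g\neq 0$. So it suffices to produce $g\in M$ with $\hat g(P')=0$ but $d_{P'}\hat g\neq 0$. Now consider $\widehat{M}=\{\hat g : g\in M\}\subseteq \mathfrak{G}^{(n-m)}(P')$, which is again a subring closed under differentiation (the implicit function formula $Dg(\overline{x})=-A_y^{-1}A_x$ shows $\widehat{\phantom{123}}$ is compatible with the derivations up to the chain rule), and which is Noetherian as a homomorphic image of the Noetherian ring $M$.

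At this point I would argue: the failure of (b) gives an element $g_0:=h\in M$ with $\hat g_0(P')=0$ (since $h(P)=0$ and $\widehat{\phantom{123}}$ is a ring map preserving the base point) and $\hat g_0\not\equiv 0$ near $P'$ (since $h$ does not vanish on a neighbourhood of $P$ in $V^{ns}(f_1,\dots,f_m)$, which is precisely the image under $\Phi$ of a neighbourhood of $P'$). So $\hat g_0$ is a nonzero germ vanishing at $P'$. If $d_{P'}\hat g_0\neq0$ already, take $g=g_0$ and we are in case (c). Otherwise $\hat g_0$ vanishes to order $\geq 2$ at $P'$, and here is where the Noetherian differential structure of $\widehat{M}$ enters: since $\hat g_0\not\equiv 0$, not all of its higher‑order partial derivatives can vanish identically near $P'$ (a nonzero analytic germ has some nonvanishing Taylor coefficient), so some iterated derivative $\partial^\alpha \hat g_0$ is a germ that does not vanish at $P'$ for suitable $\alpha$; descending from a maximal‑order such $\alpha$, there is an iterated derivative $\hat g_1=\partial^\beta\hat g_0$ with $\hat g_1(P')=0$ but $d_{P'}\hat g_1\neq 0$ — and $\hat g_1=\widehat{\partial^\beta g_0}$ with $\partial^\beta g_0\in M$ since $M$ is closed under differentiation (modulo chain‑rule corrections, which lie in $M$ as well). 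Then $g:=\partial^\beta g_0$ witnesses (c) via Lemma~\ref{Wilkie lemma 4.7}.

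The main obstacle, and the place I would spend the most care, is the bookkeeping of the differentiation: $\widehat{\phantom{123}}$ does \emph{not} commute on the nose with $\partial/\partial x_j$ because of the chain rule through $\Phi$, so ``$\partial^\beta g_0\in M$ implies $\widehat{\partial^\beta g_0}=\partial^\beta\widehat{g_0}$'' is false literally. The fix — as in Wilkie — is that the discrepancy between $\widehat{\partial_j g}$ and $\partial_j\hat g$ is a combination of $\hat g$, the $\widehat{\partial_k g}$, and the (definable, analytic, hence in the relevant germ rings) entries of $-A_y^{-1}A_x$; so one works not with a single derivative but with the differential module / the fact that $\widehat M$ is stable under $\partial/\partial x_j$ acting on $\mathfrak{G}^{(n-m)}(P')$ because $M$ is stable under all $\partial/\partial x_i$ on $\mathfrak{G}^{(n)}(P)$ and $\Phi$'s components satisfy $\partial_j\Phi_{n-m+i}\in \widehat M$. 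Once this is phrased correctly, the ``descend to a maximal nonvanishing order'' step is the routine Noetherian argument (the ascending chain of ideals generated by iterated derivatives of $\hat g_0$ stabilizes, forcing some derivative to have nonzero value, whence the first‑order witness). I expect the analytic/$p$‑adic specifics to cause no trouble here: everything used — implicit function theorem, nonvanishing of some Taylor coefficient of a nonzero analytic germ, definability transfer — is available over $K$ by the results already set up, so the proof is genuinely ``word for word'' the real case once the derivation bookkeeping is in place.
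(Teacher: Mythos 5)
Your architecture is essentially the paper's: pass to the germ map $\widehat{\phantom{123}}$ induced by the implicit function theorem, use Lemma \ref{Wilkie lemma 4.7} to translate alternative (c) into finding $g$ with $\hat g(P')=0$ but $d_{P'}\hat g\neq 0$, and extract such a $g$ from the fact that a nonzero analytic germ vanishing at $P'$ has some nonvanishing Taylor coefficient. The one genuine gap sits exactly where you say you would ``spend the most care'', and your proposed fix is wrong as stated: $\widehat M$ is \emph{not} closed under the derivations $\partial/\partial x_j$ of $\mathfrak{G}^{(n-m)}(P')$. By the chain rule, $\partial_j\hat g=\widehat{\partial_j g}+\sum_k\widehat{\partial_k g}\cdot\partial_j\Phi_k$, and the entries $\partial_j\Phi_k$ of $-A_y^{-1}A_x$ involve the inverse of $\Lambda:=\det A_y$, which lies in $M$ but whose inverse need not. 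So your claim that ``the chain-rule corrections lie in $M$ as well'' and that $\partial_j\Phi_{n-m+i}\in\widehat M$ fails; the iterated derivatives $\partial^\beta\hat g_0$ live only in the localization, and the witness $\partial^\beta g_0$ you hand to alternative (c) is a priori an element of $M[\Lambda^{-1}]$, not of $M$ as the statement requires.

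The paper repairs this in two moves that your proposal is missing. First it sets $M^*:=M[\Lambda^{-1}]$ (legitimate since $\Lambda(P)\neq 0$, so $\Lambda$ is an invertible germ at $P$) and checks that $\widehat{M^*}$ \emph{is} closed under differentiation; the Taylor-coefficient argument is then run inside $\widehat{M^*}$, producing $g\in M^*$ with $\hat g(P')=0$ and $\partial\hat g/\partial x_i(P')\neq 0$. Second, it clears denominators: $f:=\Lambda^s g\in M$ for some $s$, and the product rule gives $\partial_i\hat f(P')=s\hat\Lambda^{s-1}(P')\,\partial_i\hat\Lambda(P')\,\hat g(P')+\hat\Lambda^s(P')\,\partial_i\hat g(P')$, whose first term dies because $\hat g(P')=0$, so $d_{P'}\hat f\neq 0$ and Lemma \ref{Wilkie lemma 4.7} yields $P\in V^{ns}(f_1,\cdots,f_m,f)$ with $f\in M$. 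You need both steps; without them your construction does not land in $M$. A minor further point: the nonvanishing Taylor coefficient comes from analyticity alone (this is precisely where the paper substitutes analytic germs for Wilkie's $C^\infty$ ones), so your appeal to a stabilizing chain of ideals is a red herring --- Noetherianity is not what drives that step here.
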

Again the proof is similar to the real case \cite{Wilkie}. Note that for this proposition, we need to consider analytic functions in our case (instead of infinitely differentiable functions in \cite{Wilkie}).
\begin{proof}
If $m<n$, say $n=l+m$, then the vectors $d_{P}f_1,\cdots , d_{P} f_m$ are linearly independent. Without loss of generality, we will assume that the matrix $A (P)=\left(\frac{\partial f_i}{\partial x_j}(P)\right)_{1\leq i\leq m, l+1\leq j\leq n}$ is invertible. Let $\lambda$ be the map $\overline{x}\longmapsto $det $A(\overline{x})$. On a neighbourhood $U$ of $P$, this map is invertible. Let $\Lambda = [\lambda ,U]$. We define $M^*:=M[\Lambda^{-1}]$. Assume $P=(P_1,P_2) \in K^{l\times m}$. We define the $\widehat{\phantom{123}}$-map as before. Then, $\widehat{M^*}$, the image of $M^*$ by this map, is Noetherian. And, by the implicit function theorem, we have 
$$\left(
\begin{array}{c}
\frac{\partial \Phi_{l+1}}{\partial x_r}\\
\vdots\\
\frac{\partial \Phi_{n}}{\partial x_r}
\end{array}
\right)
= -\Lambda^{-1}
\left(
\begin{array}{c}
\frac{\partial f_{1}}{\partial x_r}\\
\vdots\\
\frac{\partial f_{m}}{\partial x_r}
\end{array}
\right),$$
which means that $\deriv{\Phi_i}{x_j}\in M^*$. Therefore using the chain rule, we find that $\widehat{M^*}$ is closed under differentiation. 
\par  Let $I=\{ g\in \widehat{M^*}\mid\ g(P_1)=0\}$.
\begin{enumerate}
	\item If $I=\{0\}$. Suppose $g=[h,W]\in M$ and $h(P)=0$. Then, $\hat{g}(P_1)=0$ and therefore $\hat{g}\in I$ i.e. $\hat{g}=0$. By definition of the map $\widehat{\phantom{123}}$, it exactly means that $h$ is vanishing on a neighbourhood of $P$ in $V^{ns}(f_1,\cdots , f_m)$.
	\item If $I\not=\{0\}$, $I$ is not closed under differentiation. Otherwise for all $g\in I$, the partial derivatives of $g$ vanish at $P_1$. This implies that all the coefficients of the power series defining $g$ around $P$ are zero and therefore $g=0$ in $\widehat{M^*}$. So, there is $g\in M^*$ such that $\hat{g}\in I$ and $\frac{\partial \hat{g}}{\partial x_i}\notin I$. It means that $\hat{g}(P_1)=0$ (i.e. $g(P)=0$) and  $\frac{\partial \hat{g}}{\partial x_i}(P_1)\not=0$. But, for some integer $s$, $\Lambda^sg\in M$. Let $f=\Lambda^s g$. Then, $f(P)=0$ and 
	$$\frac{\partial \hat{f}}{\partial x_i}(P_1)=\left(s\hat{\Lambda}^{s-1}(P_1)\frac{\partial \hat{\Lambda}}{\partial x_i}\hat{g}(P_1)\right) + \left( \hat{\lambda}^s(P_1)\frac{\partial \hat{g}}{\partial x_i}(P_1)\right)\not=0.$$
	 So, $d_P\hat{f}\not=0$ and therefore by lemma \ref{Wilkie lemma 4.7}, $P\in V^{ns}(f_1,\cdots , f_m,f)$.
	
\end{enumerate}  
\end{proof}

We are now able to state the desingularization theorem:
\par Let $U$ be an open definable neighbourhood of the origin contained in $\mathcal{O}_K^n$. Then, $\{U\}$ forms a neighbourhood system. We denote the correspondent ring of germs by $\mathfrak{G}^{(n)}(U)$.  
Let us recall that we may assume $K=\Qp(\alpha_1,\cdots, \alpha_s)$ and that for this choice of $\alpha_1,\cdots , \alpha_s$,
 $\Z (\alpha_1 ,\cdots , \alpha_s)$ is dense in the valuation ring of $K$. 
\begin{theorem}\label{Wilkie thm 5.1}
Let $M$ be a Noetherian subring of $\mathfrak{G}^{(n)}(U)$ which contains $\Z (\overline{\alpha})[x_1,\cdots ,x_n]$, is closed under differentiation and such that for all $g\in M$, the germ of $g$ is equivalent to a definable analytic function given by a power series  with coefficients in the valuation ring.
\par Let $f\in M$. Assume that $S$ is a non-empty definable subset of $V(f)$, open in $V(f)$. Then, there exist $f_1,\cdots , f_n\in M$ such that $S\cap V^{ns}(f_1,\cdots , f_n)\not=\varnothing$.
\end{theorem}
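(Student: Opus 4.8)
The plan is to fix a point $P\in S$ and to manufacture the required system $f_1,\dots,f_n\in M$ by pushing $P$ through Proposition \ref{Wilkie thm 4.9} as many times as possible and then closing the remaining gap with the density of $\Z(\overline{\alpha})^n$ in the valuation ring. First note that since $P\in S\subseteq V(f)\subseteq U\subseteq\mathcal{O}_K^n$ we have $f(P)=0$ and $P\in\mathcal{O}_K^n$. Replacing $M$ by its image in the ring of germs $\mathfrak{G}^{(n)}(P)$ — still Noetherian, being a quotient of $M$, and still closed under differentiation — we may work with germs at $P$, lifting back to honest functions on $U$ only at the very end.

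We iterate Proposition \ref{Wilkie thm 4.9}, starting from the empty system $m=0$ (for which $V^{ns}(\varnothing)=U$, so trivially $P\in V^{ns}(\varnothing)$). At a general stage we have $f_1,\dots,f_m\in M$ with $P\in V^{ns}(f_1,\dots,f_m)$ and $0\le m\le n$, and the proposition forces exactly one of: (a) $m=n$, in which case, since $P\in S$, the system $f_1,\dots,f_n$ together with the point $P$ already proves the theorem; (b) $m<n$ and a certain vanishing property holds, treated below; or (c) $m<n$ and some $h\in M$ satisfies $P\in V^{ns}(f_1,\dots,f_m,h)$, in which case we set $f_{m+1}:=h$ and repeat. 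As $m$ strictly increases at each occurrence of (c) and is bounded by $n$, after finitely many steps we land in case (a), where we are done, or in case (b).

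It remains to dispose of case (b). Put $l:=n-m>0$; by definition of (b), every $h\in M$ with $h(P)=0$ vanishes on $W\cap V^{ns}(f_1,\dots,f_m)$ for some open $W\ni P$. Applying this to $h=f$ (which lies in $M$ and vanishes at $P$) gives $W\cap V^{ns}(f_1,\dots,f_m)\subseteq V(f)$; since $S$ is open in $V(f)$ and contains $P$, after shrinking $W$ we may also assume $W\cap V^{ns}(f_1,\dots,f_m)\subseteq S$. Reordering the coordinates, assume the $m\times m$ block $\bigl(\partial f_i/\partial x_j\bigr)_{1\le i\le m,\ l<j\le n}$ is invertible near $P$; writing $P=(\overline{a},\overline{b})$ with $\overline{a}\in K^l$ and $\overline{b}\in K^m$, the implicit function theorem presents $V^{ns}(f_1,\dots,f_m)$ near $P$ as the graph of an analytic map $\Phi\colon U_{\overline{a}}\to U_{\overline{b}}$ on an open neighbourhood $U_{\overline{a}}\subseteq\mathcal{O}_K^l$ of $\overline{a}$. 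Because $\Z(\overline{\alpha})^l$ is dense in $\mathcal{O}_K^l$ and $\Phi$ is continuous, we may choose $\overline{c}\in\Z(\overline{\alpha})^l\cap U_{\overline{a}}$ so close to $\overline{a}$ that $P^{*}:=(\overline{c},\Phi(\overline{c}))\in W$; then $P^{*}\in W\cap V^{ns}(f_1,\dots,f_m)\subseteq S$. Finally set $f_{m+i}:=x_i-c_i\in\Z(\overline{\alpha})[\overline{x}]\subseteq M$ for $i=1,\dots,l$. Then $f_{m+i}(P^{*})=0$, and since $d_{P^{*}}f_{m+1},\dots,d_{P^{*}}f_n$ are the first $l$ coordinate vectors while $d_{P^{*}}f_1,\dots,d_{P^{*}}f_m$ have invertible projection onto the last $m$ coordinates, the $n$ vectors $d_{P^{*}}f_1,\dots,d_{P^{*}}f_n$ are $K$-linearly independent. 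Hence $P^{*}\in S\cap V^{ns}(f_1,\dots,f_n)$, as required.

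Essentially all the real content sits in Proposition \ref{Wilkie thm 4.9}; the one place demanding care is case (b), where the point produced must be simultaneously non-singular for the \emph{full} $n$-term system, genuinely inside $S$ rather than merely inside $V(f)$, and cut out by members of $M$ — these three requirements are met respectively by the implicit function theorem, by the openness of $S$ in $V(f)$, and by the density of $\Z(\overline{\alpha})$ together with the hypothesis $\Z(\overline{\alpha})[\overline{x}]\subseteq M$. The remaining hypotheses on $M$ (Noetherian, closed under differentiation, germs represented by power series with coefficients in $\mathcal{O}_K$) are exactly what make Proposition \ref{Wilkie thm 4.9} and the $p$-adic implicit function theorem applicable in the first place. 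I expect the bookkeeping involved in shrinking the neighbourhoods and the final linear-independence verification to be the only mildly delicate points.
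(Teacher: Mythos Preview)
Your argument is correct, and it takes a genuinely different route from the paper. The paper follows Wilkie's original real strategy: it first picks $R\in S$ with $I_R=\{g\in M: g(R)=0\}$ maximal among all such ideals, then picks $m$ maximal with $R\in V^{ns}(f_1,\dots,f_m)$, and assuming $m<n$ derives a contradiction. That contradiction goes through the quadratic test functions $h_N(\overline{X})=\sum_i (X_i-N_i)^2$: after a careful change of variables to tame the valuations of the implicit functions and their derivatives, the paper applies the analytic Hensel's lemma to the system $\partial\hat h_N/\partial X_i$ and a nontrivial Jacobian computation (the Claim inside the proof) to produce, for every suitable $N\in\Z(\overline{\alpha})^n$, a point at which $d f_1,\dots,d f_m, d h_N$ are dependent; varying $N$ then forces the span of $d f_1,\dots,d f_m$ to be all of $K^n$, contradicting $m<n$.

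You bypass all of this. You iterate Proposition~\ref{Wilkie thm 4.9} at a fixed $P\in S$; when the iteration halts in case~(b), you move along the implicit graph $\Phi$ to a nearby point $P^*=(\overline{c},\Phi(\overline{c}))$ with $\overline{c}\in\Z(\overline{\alpha})^l$, and then complete the system with the linear functions $x_i-c_i\in\Z(\overline{\alpha})[\overline{x}]\subseteq M$. The block-triangular Jacobian check is immediate, and case~(b) applied to $h=f$ together with openness of $S$ in $V(f)$ puts $P^*$ in $S$. This is shorter and more elementary: it uses neither the maximal-ideal point $R$, nor Hensel's lemma, nor the Jacobian claim. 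What the paper's route buys is faithfulness to Wilkie's original template (and the contradiction form gives slightly more, namely that the maximal $m$ at the special point $R$ is already $n$); what your route buys is a direct construction with almost no analysis beyond the implicit function theorem and density of $\Z(\overline{\alpha})$ in $\mathcal{O}_K$. One small remark: when you say ``Replacing $M$ by its image in $\mathfrak{G}^{(n)}(P)$'', note that the lifts of the $f_i$ back to $M\subseteq\mathfrak{G}^{(n)}(U)$ are honest functions on $U$, so evaluating them at the shifted point $P^*$ (which need not be $P$) is unproblematic --- you handle this correctly, but it is worth being explicit since germ-level statements a priori only control behaviour near $P$.
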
 

\begin{proof}
First, for all $Q\in S$, we set $I_Q=\{g\in M\mid\ g(Q)=0\}$. As $M$ is Noetherian, there is some $R$ in $S$ such that $I_R$ is maximal within the collection of all $I_Q$. Let $g_1,\cdots ,g_N$ be generators of $I_R$ and $g=\sum_i \pi^{2(i-1)}g_i^2$ (where $\pi$ is a prime element of $K$ which can be assumed to be one of the $\alpha_i$). So, $g(x)=0$ iff $g_i(x)=0$ for all $i$. Then, $R\in V(g)\cap S$ and for all $Q\in V(g)\cap S$, $I_R=I_Q$.
Choose $m$ maximal such that for some $f_1,\cdots ,f_m\in M$, $R\in V^{ns}(f_1,\cdots ,f_m)$.
\par By contradiction, assume that $m<n$; say $n=m+l$.

\par Note that up to a $\Z(\overline{\alpha})$-linear change of variables, we may assume $R$ as close to the origin as we will need (more precisely, we need that the neighbourhood $W_R$ below contains the origin). First, we will now prove that $V(g)\cap S$ and $V^{ns}(f_1,\cdots ,f_m)$ locally coincide.
\begin{enumerate}[(a)]
	\item $V(g)\cap S\subseteq V^{ns}(f_1,\cdots , f_m)$:
\par	Indeed, $R\in V^{ns}(f_1,\cdots , f_m)$. So, $f_i\in I_R$ for all $i$ and det $E\notin I_R$ (where $E$ denotes the matrix $\left( \frac{\partial f_i}{\partial x_j}\right)$ with $K$-linearly independent vectors). As, for all $Q\in V(g)\cap S$, $I_Q=I_R$, it means that  $f_i\in I_Q$ and det $E\notin I_Q$. So, $Q\in V^{ns}(f_1,\cdots , f_m)$.  
	\item Let $Q\in V(g)\cap S$, $h\in  M$ then $Q\notin V^{ns}(f_1,\cdots , f_m,h)$:	
\par	If we assume $Q\in V^{ns}(f_1,\cdots , f_m,h)$, arguing like in (a), we would find $R\in V^{ns}(f_1,\cdots , f_m,h)$ which contradicts the maximality of $m$.
	\item For all $Q\in V(g)\cap S$, there is $W_Q$ an open neighbourhood of $Q$ such that $W_Q\cap V(g)\cap S=W_Q\cap V^{ns}(f_1,\cdots , f_m)$:
\par 	By the point (b) and the proposition \ref{Wilkie thm 4.9}, the only possibility is that there is $W'$ open neighbourhood of $Q$ such that $g$ vanishes on $W'\cap V^{ns}(f_1,\cdots , f_m)$. As $f\in I_R$, it means that $V(f)\supseteq V(g)$ and therefore that $f$ vanishes on $W'\cap V^{ns}(f_1,\cdots , f_m)$. So, $W'\cap V^{ns}(f_1,\cdots , f_m)\subseteq W'\cap V(g)\cap V(f)$. We have that $S$ is open in $V(f)$. So, for some $W''$ open neighbourhood of $Q$, $W''\cap S=W''\cap V(f)$. Take $W_Q=W'\cap W''$ and we are done.
\end{enumerate}

We are given $f_1,\cdots , f_m$. Without loss of generality, we may assume that the matrix  $\Delta = \left( \frac{\partial f_i}{\partial x_j}\right)_{1\leq i\leq m; l+1\leq j\leq n}$ has non-vanishing determinant at $R=(P,Q)$. Let $\Phi_{l+1}(\overline{x}),\cdots , \Phi_n(\overline{x})$ given by the implicit function theorem and let $\Phi_i (\overline{x}) = x_i$ for $i\leq l$.
\par First, let us remark that up to a change of variables, we can assume that $\Phi_i(P)$ and $\frac{\partial \Phi_i}{\partial x_j}(P)$ (where $i>l\geq j$) lies in the  maximal ideal $\mathfrak{M}_K$.
 Indeed, by a change of variables of the type $(X_1,\cdots , X_n)\longmapsto (X_1-N_1,\cdots , X_n-N_n)$ (where $N_i\in  \N(\overline{\alpha})$ is a suitable approximation of $P_i,Q_i$), we can assume that  the implicit functions are defined on a neighbourhood of $0$. This means that we can assume $v(P)>t$ and $v(\Phi_i(P))=v(Q)>t$ (where $t$ could be any nonnegative integer). Also, we know that for all $r\leq l$

\begin{equation}
\left(
\begin{array}{c}
\frac{\partial \Phi_{l+1}}{\partial x_r}\\
\vdots\\
\frac{\partial \Phi_{n}}{\partial x_r}
\end{array}
\right)(\overline{X})
= -\Delta^{-1}
\left(
\begin{array}{c}
\frac{\partial f_{1}}{\partial x_r}\\
\vdots\\
\frac{\partial f_{m}}{\partial x_r}
\end{array}
\right)(\overline{X},\Phi_{l+1}(\overline{X}),\cdots , \Phi_{n}(\overline{X})). \label{derivative relation} 
\end{equation}

We consider the change of variables $$(X_1,\cdots , X_l,X_{l+1},\cdots, X_n) \longmapsto (X_1,\cdots , X_l,X_{l+1}/\pi^t,\cdots, X_n/\pi^t).$$ Denote by $\tilde{f}$ the function obtained after this change of variables. Then, for all $i\leq m$,
$$
\frac{\partial \tilde{f}_i}{\partial x_j} (\widetilde{P},\widetilde{Q})= 
\left\{\begin{array}{rl}
\frac{\partial f_i}{\partial x_j}(P,Q)&\mbox{ for $j\leq l$}\\
 \pi^{-t} \frac{\partial f_i}{\partial x_j}(P,Q)&\mbox{ for $l+1\leq j\leq n$},
 \end{array}\right.
$$
where $(\widetilde{P},\widetilde{Q})=(P,\pi^tQ)$. So, $\tilde{\Delta}(\widetilde{P},\widetilde{Q}) = \frac{1}{\pi^t} \Delta (P,Q)$. For $t$ large enough, $\tilde{\Delta}(\widetilde{P},\widetilde{Q})$ has negative valuation. Therefore, by the relation (\ref{derivative relation}), 
$$v\left(\frac{\partial \widetilde{\Phi}_i}{\partial x_j}(\widetilde{P},\widetilde{Q})\right)>0.$$
Without loss of generality, we will assume that such a changement has be done and will denote $(\widetilde{P},\widetilde{Q})$ by $(P,Q)$ and similarly for the functions.

\par Let $h_N(\overline{X}):=\sum (X_i-N_i)^2$ for all $N=(N_1,\cdots, N_n)\in \Z[\alpha_1 ,\cdots , \alpha_s]^n$. We want to apply Hensel's lemma to the functions $\left(\frac{\partial \hat{h}_N}{\partial X_1},\cdots ,\frac{\partial \hat{h}_N}{\partial X_l}\right)$. 
Our goal is to prove that for a point $(P',Q')$, close enough from $(P,Q)$, the vectors $d_{(P',Q')}f_1,\cdots , d_{(P',Q')}f_m, d_{(P',Q')}h_N$ are linearly dependent. For this, by lemma \ref{Wilkie lemma 4.7}, it is sufficient to check that the above partial derivatives vanish at $P'$.
\par We want to prove that if we choose $N$ carefully, for all $i$, $\frac{\partial \hat{h}_N}{\partial X_i}(P)$ has valuation at least $2v($ det $J(P))+\varepsilon+1$ where $J$ is the Jacobian of the system, $\varepsilon$ is the radius of the open set $W_R$ given in (c). Then, the analytic Hensel's lemma gives us a root $P'$, $\varepsilon$-close from $P$.
\begin{claim} det $J(P)\not=0$.
\end{claim}
\begin{proof}
 We compute the following derivatives using the chain rule :
$$ g_i:=\frac{\partial \hat{h}_N}{\partial X_i}(P) = \sum_{1\leq k \leq n} 2\cdot \Big(\Phi_k (P)-N_k\Big) \frac{\partial \Phi_k}{\partial X_i} (P).$$
$$ \deriv{g_i}{X_j}(P) = \frac{\partial^2 \hat{h}_N}{\partial X_j\partial X_i}(P)=  \sum_k 2\cdot \Big( \frac{\partial \Phi_k}{\partial X_i} (P)\cdot  \frac{\partial \Phi_k}{\partial X_j} (P)\Big) + 2\cdot \Big(\Phi_k (P)-N_k\Big) \frac{\partial^2 \Phi_k}{\partial X_j\partial X_i} (P).$$
     
We want to prove that the Jacobian of $g=(g_1,\cdots g_n)$ is non vanishing at $P$. In the above sum, let us denote  $\sum_k 2\cdot \frac{\partial \Phi_k}{\partial X_i} (P)\cdot \frac{\partial \Phi_k}{\partial X_j} (P)$  by $B_{ij}$ and the other terms $\sum_k 2\cdot \Big(\Phi_k (P)-N_k\Big) \frac{\partial^2 \Phi_k}{\partial X_j\partial X_i} (P)$ by $C_{ij}$. Then, let $S_l$ be the permutation group of $\{1,\cdots ,l \}$ and $sgn(\sigma )$ be the signature of an element $\sigma\in S_l$. We have:
\begin{align*}
\mbox{det } J_g(P) &= \sum_{\sigma\in S_l} sgn(\sigma)  \prod J_{i\sigma (i)}\\
								 &= \sum sgn(\sigma ) \prod (B_{i\sigma (i)}+ C_{i\sigma (i)})\\
								 &= \mbox{det }B + (\cdots),	 
\end{align*}
  
where in the sum $(\cdots)$, each element contains at least one factor of the form $\Big(\Phi_k (P)-N_k\Big)$.
\par If det $B\not=0$, then, for $N_k$ a suitable approximation of  $\Phi_k (P)$, the valuation of det $J_g(P)$ is given by the valuation of det $B$ (let us remark that in this case this valuation does not depend on $N$). And therefore, det $J_g(P)\not=0$.
\par We remark that for all $k\leq l$, $\frac{\partial \Phi_k}{\partial X_i}\cdot \frac{\partial \Phi_k}{\partial X_j} = \delta_{ijk}$ ($\delta$ is the Kronecker delta). So, if we denote by $D_{ij}$ the sum over $k>l$ in $B_{ij}/2$, we have: $\frac{1}{2}B= Id+ D$ and
\begin{align*}
\frac{1}{2}\mbox{det } B &= \sum_{\sigma\in S_r} sgn(\sigma) . \prod \Big(\delta_{i\sigma(i)} + D_{i\sigma(i)}\Big)\\
								 &= \left(\mbox{det }D - \prod D_{ii}\right) + \prod\Big(1+D_{ii}\Big).	 
\end{align*}   

Now, assume by contradiction that det $B =0$. Let us recall that for all $i>l$, for all $k$,
$$v\left(\frac{\partial \Phi_i}{\partial X_k}(P)\right)>0\ (*). $$
Therefore, $v(D_{ii})>0$ and as det $B=0$, 
 $$v\left( \mbox{det }D - \prod D_{ii}\right) = v \left(\prod(1+D_{ii})\right)=0.$$ 
We deduce from these relations that $v($ det $D)=0$. This is a contradiction with $(*)$.
\par This completes the proof of the claim.
\end{proof}

Now, for $N_k\in \Z[\alpha_1 ,\cdots , \alpha_s]$ a suitable approximation of $\Phi_k (P)$,  $g_i(P)$ has valuation at least $2v($ det $J(P))+\varepsilon+1$ (as we have seen the valuation of $J(P)$ does not depend on $N$ in this case). So, by Hensel's lemma, there exists $P'$ ($\varepsilon$-close from $P$) such that for all $i$, $g_i(P')=0$ i.e $d_{P'}\hat{h}_N =0$.
\par Let $Q'= (\Phi_{l+1}(P'),\cdots , \Phi_n(P'))$.  Then, $(P',Q')\in V^{ns}(f_1,\cdots , f_m)$ and  by lemma \ref{Wilkie lemma 4.7}, $d_{(P',Q')} f_1, \cdots, d_{(P',Q')} f_m, d_{(P',Q')} h_N$ are linearly dependent over $K$.
 But, as $(P',Q')$ is in $W_R$ (if we pick $\varepsilon$ small enough), we have that $(P',Q')\in V^{ns}(f_1,\cdots , f_m)\cap W_R\subseteq V(g)\cap S$. Then, by an argument similar to the proof of (a), $d_{(P,Q)} f_1, \cdots, d_{(P,Q)} f_m, d_{(P,Q)} h_N$ are also linearly dependent for all $N$ suitable approximation of $\Phi (P)$. As $d_{(P,Q)} f_1, \cdots, d_{(P,Q)} f_m$ are linearly independent, it implies that $d_{(P,Q)} h_N$ lies in the linear span of the other vectors.
\par Let $N'= (N_1,\cdots , N_{i-1},N_i+p^{t_i},N_{i+1},\cdots,N_n)$, then $N'$ is also a suitable approximation of $\Phi(P)$ (for all $t_i$ large enough) and therefore $d_{(P,Q)}h_{N'}$ lies in the same vector space. But then, $(0,\cdots , p^{t_i},0,\cdots , 0) = (d_{(P,Q)}h_{N'}-d_{(P,Q)}h_{N})/2$ lies in the linear span of  $d_{(P,Q)} f_1, \cdots, d_{(P,Q)} f_m$ for all $i$, which contradicts that $m<n$.   
\end{proof}
Our desingularization result is an immediate corollary of this theorem:
\begin{cor}\label{cor Wilkie 5.1}
Let $F\subset \mathcal{O}_K\{\overline{X}\}$. Assume that the set of $\mathcal{L}_F$-terms is closed under derivation and that for any finite collection of $\mathcal{L}_F$-terms, the ring generated by these terms, their subterms and their derivatives is Noetherian. Let $f$ be a $\mathcal{L}_F$-term and assume that the formula $\exists x_1\cdots  \exists x_n f(\overline{x})=0$ is satisfied in $\mathcal{O}_K$. Then there are $f_1,\cdots, f_n$ $\mathcal{L}_F$-terms such that the formula  $$\exists \overline{x}\ f(\overline{x})=f_1(\overline{x}) =\cdots = f_n(\overline{x})=0\not = det\ J_{(f_1,\cdots , f_n)}(\overline{x})$$ is satisfied in $\mathcal{O}_K$.
\end{cor}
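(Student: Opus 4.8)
The statement follows from a single application of Theorem~\ref{Wilkie thm 5.1}, so the plan is essentially to arrange its hypotheses. Take $U=\mathcal{O}_K^n$, which is an open definable neighbourhood of the origin contained in $\mathcal{O}_K^n$, and work in $\mathfrak{G}^{(n)}(U)$ for the neighbourhood system $\{U\}$, so that a germ is literally a definable analytic function $U\to K$. By hypothesis there is $\overline{a}\in\mathcal{O}_K^n$ with $f(\overline{a})=0$; set $S:=V(f)=\{P\in U\mid f(P)=0\}$, a non-empty definable subset of $V(f)$ which is (trivially) open in $V(f)$. Let $M$ be the subring of $\mathfrak{G}^{(n)}(U)$ generated by $\Z(\overline{\alpha})[x_1,\dots,x_n]$ together with the germs of all iterated partial derivatives of all subterms of $f$; this list contains $f$ itself, so $f\in M$.

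I would then check that $M$ satisfies the requirements of Theorem~\ref{Wilkie thm 5.1}. Since the set of $\mathcal{L}_F$-terms is closed under derivation, the iterated derivatives of the finitely many subterms of $f$ form a finite collection of $\mathcal{L}_F$-terms, so the ring $M$ they generate together with $\Z(\overline{\alpha})[\overline{x}]$ is Noetherian by the hypothesis of the corollary; and $M$ is closed under differentiation because each generator has all its partial derivatives again among the generators (or in $\Z(\overline{\alpha})[\overline{x}]$), whence the conclusion for all of $M$ by the Leibniz rule. It remains to see that every $g\in M$ is given by a power series with coefficients in $\mathcal{O}_K$: each $h\in F\subseteq\mathcal{O}_K\{\overline{X}\}$ restricted to the valuation ring takes values in $\mathcal{O}_K$ and, expanded about any point of $\mathcal{O}_K^n$, has coefficients in $\mathcal{O}_K$ (re-centring a series with $\mathcal{O}_K$-coefficients only brings in integer binomial coefficients and powers of coordinates, which lie in $\mathcal{O}_K$); this property is preserved under sums, products and compositions, and also under $\partial_{x_i}$, since in the non-archimedean setting differentiation does not shrink the domain of convergence and merely multiplies the coefficient of $\overline{X}^\beta$ by $\beta_i\in\Z$. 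Hence every generator of $M$, and so every element of $M$, is of the required form.

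Applying Theorem~\ref{Wilkie thm 5.1} then yields $f_1,\dots,f_n\in M$ with $S\cap V^{ns}(f_1,\dots,f_n)\neq\varnothing$. Choosing $\overline{b}$ in this intersection, we get $f(\overline{b})=0$ (as $\overline{b}\in S=V(f)$), $f_1(\overline{b})=\dots=f_n(\overline{b})=0$, and $d_{\overline{b}}f_1,\dots,d_{\overline{b}}f_n$ $K$-linearly independent, which for $n$ vectors in $K^n$ is equivalent to $\det J_{(f_1,\dots,f_n)}(\overline{b})\neq0$. Finally, each $f_i$ is, by construction of $M$, a polynomial over $\Z(\overline{\alpha})$ in finitely many iterated derivatives of subterms of $f$; these derivatives are $\mathcal{L}_F$-terms by the closure hypothesis, and polynomial combinations of $\mathcal{L}_F$-terms with coefficients among the constants $\overline{\alpha}$ of the language are again $\mathcal{L}_F$-terms, so each $f_i$ is an $\mathcal{L}_F$-term. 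Thus $\overline{b}\in\mathcal{O}_K^n$ witnesses $\exists\overline{x}\ f(\overline{x})=f_1(\overline{x})=\dots=f_n(\overline{x})=0\neq\det J_{(f_1,\dots,f_n)}(\overline{x})$ in $\mathcal{O}_K$.

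The steps I expect to need the most care are the verification that $M$ lies inside the class of functions presented by power series with valuation-ring coefficients, and the observation that the $f_i$ delivered by Theorem~\ref{Wilkie thm 5.1} — a priori only elements of the abstract ring $M$ — can genuinely be rewritten as $\mathcal{L}_F$-terms; the remaining steps are a direct substitution into that theorem.
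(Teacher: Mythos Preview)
Your proposal is correct and follows essentially the same approach as the paper: set $U=\mathcal{O}_K^n$, $S=V(f)$, take $M$ to be the ring generated over $\Z(\overline{\alpha})$ by the subterms of $f$ and their derivatives, apply Theorem~\ref{Wilkie thm 5.1}, and then observe that elements of $M$ are $\mathcal{L}_F$-terms by the closure-under-derivation hypothesis. The paper's proof is only a few lines and omits the verifications you carry out (Noetherianity, closure under $\partial_{x_i}$, power-series coefficients in $\mathcal{O}_K$); your one slightly loose sentence is the claim that ``the iterated derivatives of the finitely many subterms of $f$ form a finite collection'' --- closure under derivation does not by itself guarantee finiteness, so the Noetherianity of $M$ should be read off directly from the corollary's hypothesis rather than from finiteness of a generating set --- but this does not affect the argument.
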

\begin{proof}
 Let $f$ be a $\mathcal{L}_F$-term. Then, we apply the  theorem \ref{Wilkie thm 5.1} with $U=\mathcal{O}_K$, $M$ the ring generated over $\Z (\overline{\alpha})$ by the subterms of $f$ and their derivatives. We take $S=V(f)$. The theorem exactly says that if $V(f)\not=\varnothing$, then there are $f_1,\cdots , f_n\in M$ and $\overline{a}\in \mathcal{O}_K^n$ such that
$$f(\overline{a})=f_1(\overline{a}) =\cdots = f_n(\overline{a})=0\not = det\ J_{(f_1,\cdots , f_n)}(\overline{a}).$$
Finally, let us remark that as the set of $\mathcal{L}_F$-terms is closed under derivation, any element of $M$ is a $\mathcal{L}_F$-term.
\end{proof}
\begin{cor}\label{Desingularization exponential polynomial}
Let $P(X_1,\cdots, X_n,Y_1,\cdots , Y_n)\in \Z[\overline{X},\overline{Y}]$. Assume that $f(X)=P(\overline{X},E_p(\overline{X}))$ has a root in $\Zp$. Then there is $P_1,\cdots, P_n\in \Z[\overline{X},\overline{Y}]$ and $\overline{b}\in \Zp^n$ such that
$$P(\overline{b},E_p(\overline{b}))=P_1(\overline{b},E_p(\overline{b}))=\cdots = P(\overline{b},E_p(\overline{b}))=0\not= \mbox{ det }J_{(P_1,\cdots P_n)}(\overline{b}). $$
\end{cor}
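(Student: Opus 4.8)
The plan is to obtain this as the special case $K=\Qp$ (so $s=0$ and $\Z(\overline{\alpha})=\Z$) and $F=\{E_p\}$ of Corollary~\ref{cor Wilkie 5.1}. Essentially all that needs to be done is to check that $\{E_p\}$ satisfies the standing assumptions of that corollary and to identify the resulting ring of germs explicitly.

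First I would verify the two hypotheses of Corollary~\ref{cor Wilkie 5.1}. Since $E_p'(x)=p\,E_p(x)$ (and $E_2'(x)=4\,E_2(x)$), an induction on the construction of a term, using the chain rule, shows that for every $\mathcal{L}_F$-term $t$ the partial derivative $\partial t/\partial X_j$ lies in the $\Z$-algebra generated by the subterms of $t$; in particular the class of $\mathcal{L}_F$-terms is closed under derivation. Consequently, given a finite family of $\mathcal{L}_F$-terms, the set of all their subterms is finite, and the polynomial ring it generates over $\Z$ already contains all the terms of the family together with all their (iterated) derivatives; being a finitely generated $\Z$-algebra it is Noetherian by Hilbert's basis theorem. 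Moreover $E_p\in\Zp\{X\}=\mathcal{O}_{\Qp}\{X\}$, so $F\subset\mathcal{O}_K\{\overline{X}\}$ as well.

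Now apply Corollary~\ref{cor Wilkie 5.1} to the $\mathcal{L}_F$-term $f=P(\overline{X},E_p(\overline{X}))$: its hypothesis is that $\exists\,\overline{x}\ f(\overline{x})=0$ holds in $\mathcal{O}_{\Qp}=\Zp$, which is exactly what we are given. Because $E_p(X_i)$ depends only on $X_i$, with $\partial E_p(X_i)/\partial X_j=\delta_{ij}\,p\,E_p(X_i)$, the subterms of $f$ and their derivatives generate over $\Z$ precisely the ring
$$M=\Z\bigl[X_1,\ldots,X_n,E_p(X_1),\ldots,E_p(X_n)\bigr],$$
a polynomial ring in $2n$ indeterminates. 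To see that $M$ sits inside $\mathfrak{G}^{(n)}(\mathcal{O}_{\Qp})$ as Theorem~\ref{Wilkie thm 5.1} requires, note that each $E_p(X_i)=\sum_k(p^k/k!)X_i^k$ has coefficients in $\Zp$: for $p$ odd, $v(p^k/k!)=k-v(k!)>k-k/(p-1)\geq 0$, and for $p=2$, $v(4^k/k!)>2k-k=k\geq 0$, this last inequality being the reason for the choice $E_2(x)=exp(4x)$. Hence $E_p$ converges on all of $\Zp$ and, since $\Zp\{\overline{X}\}$ is a ring, every element of $M$ is the germ of a definable analytic function on $\Zp$ given by a power series with coefficients in $\Zp$.

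Corollary~\ref{cor Wilkie 5.1} therefore produces $f_1,\ldots,f_n\in M$ and $\overline{b}\in\Zp^n$ with
$$f(\overline{b})=f_1(\overline{b})=\cdots=f_n(\overline{b})=0\neq\det J_{(f_1,\ldots,f_n)}(\overline{b}).$$
Writing each $f_i=P_i(\overline{X},E_p(\overline{X}))$ for a suitable $P_i\in\Z[\overline{X},\overline{Y}]$ — which is possible precisely because $M=\Z[\overline{X},E_p(\overline{X})]$ — yields the claimed conclusion. I expect no genuine obstacle here; the only point needing care is the bookkeeping of the previous paragraph, i.e.\ confirming that $\{E_p\}$ meets all the abstract hypotheses and, in particular, the $p$-integrality of the Taylor coefficients of $E_p$ (which forces the case distinction $p=2$).
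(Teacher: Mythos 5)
Your proposal is correct and follows essentially the same route as the paper: the paper's proof is the one-line application of Theorem~\ref{Wilkie thm 5.1} with $U=\Zp$, $M=\Z[\overline{X},E_p(\overline{X})]$ and $S=V(P(\overline{X},E_p(\overline{X})))$, and your detour through Corollary~\ref{cor Wilkie 5.1} with $F=\{E_p\}$ amounts to exactly that same application since that corollary is itself proved by invoking Theorem~\ref{Wilkie thm 5.1} with the same ring $M$. Your additional verifications (closure under derivation via $E_p'=pE_p$, Noetherianity by Hilbert's basis theorem, and the $p$-integrality and convergence of the Taylor coefficients of $E_p$, including the $p=2$ adjustment) are accurate and simply make explicit what the paper leaves implicit.
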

\begin{proof} Apply theorem \ref{Wilkie thm 5.1} with $U=\Zp$, $M=\Z[\overline{X}, E_p(\overline{X})]$ and $S=V(P(\overline{X},E_p(\overline{X})))$.
\end{proof}

\section{Decidability of positive existential sentences}\label{Decidability of positive existential sentences}
\par It is easy to see that any existential $\Lexp$-sentence is (effectively) equivalent to a disjunction of sentences of the type:
$$ \exists x_1 \cdots \exists x_n  \bigwedge_j F_j(\overline{x})=0 \wedge \bigwedge_j G_j(\overline{x})\not=0,$$
where $F_i$ and $G_j$ are in $\Z[x_1,\cdots , x_n, e^{px_1},\cdots , e^{px_n}]$. Note that unlike the real case, we cannot remove the inequalities (in the real case, we can remove inequality at the price of new varibles because nonzero elements are invertible). 
\par Let us remark that to any such exponential polynomial $F$ corresponds a polynomial in $\Z[x_1,\cdots , x_{2n}]$. And conversely, to a polynomial $P\in \Z[x_1,\cdots , x_{2n}]$ corresponds a unique element of $\Z[x_1,\cdots , x_n, e^{px_1},\cdots , e^{px_n}]$. We will denote by $F_P$ this exponential polynomial.
\par We start with the case where only equalities are involved. Once again, as for all $x,y\in\Zp$ $(x,y)=(0,0)$ iff $x^2+py^2=0$, we are reduced to the case of a single exponential polynomial, say $F_P(x_1,\cdots ,x_n)$. First, note that it is not hard to check that a given positive existential sentence is false in $\Zp$: 
\begin{lemma}\label{universal formula stop}
 Let $U=\overline{a}+p^t\Zp^n$ be an open set (where $\overline{a}\in \Z^n$, $t\in \N$) and let $g=(g_1,\cdots ,g_k)$ be $k$ exponential functions. Then, there is a recursive procedure which returns yes if there is no zero of $g$ inside $U$.
\end{lemma}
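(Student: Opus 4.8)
The plan is to reduce the question ``is there no zero of $\overline g$ in $U$'' to a statement about a single analytic function on $\Zp$, and then to exploit compactness of $U$ together with the fact that an analytic function with no zero on a compact ball has valuation bounded below by a constant that can be detected after finitely many subdivisions. First I would replace the tuple $g=(g_1,\dots,g_k)$ by the single exponential function $h=\sum_{i=1}^k p^{2(i-1)} g_i^2$, so that $h(\overline x)=0$ on $U$ iff $g(\overline x)=0$ on $U$; thus it suffices to give a recursive procedure that halts with ``yes'' when $h$ has no zero in $U=\overline a+p^t\Zp^n$. After the affine change of variables $\overline x\mapsto \overline a+p^t\overline x$ we may assume $U=\Zp^n$ and $h$ is (the restriction to $\Zp^n$ of) a convergent power series with coefficients in $\Zp$, obtained by substituting the series $E_p$ into a polynomial with integer coefficients; in particular its coefficients are computable to any prescribed $p$-adic precision.

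The procedure is a systematic subdivision. At stage $s$ we partition $\Zp^n$ into the $p^{sn}$ balls $B_{\overline c,s}=\overline c+p^s\Zp^n$ indexed by $\overline c\in\{0,\dots,p^s-1\}^n$. On each such ball, using the convergent expansion of $h$ around the center $\overline c$ (whose coefficients we compute to enough precision, controlling the tail by the known convergence estimates for $E_p$), we evaluate $v(h(\overline c))$ and compare it with the valuations of the linear-and-higher terms of the local expansion. Concretely, if $v(h(\overline c)) < v(\partial h/\partial x_j(\overline c)) + s$ for every $j$ together with the analogous strict inequality against all higher-order coefficients scaled by $p^s$ — equivalently, if the constant term dominates every other term of the expansion of $h$ on $B_{\overline c,s}$ — then $h$ is a unit times a nonzero constant on $B_{\overline c,s}$, so $h$ has no zero there. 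The procedure returns ``yes'' as soon as, at some stage $s$, every one of the $p^{sn}$ balls passes this test.

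It remains to argue that the procedure terminates precisely when $h$ has no zero in $\Zp^n$. If $h$ has no zero on the compact set $\Zp^n$ then $|h|$ attains a positive minimum $p^{-m}$ there; on the other hand $h$ is uniformly continuous, so for $s$ large enough the oscillation of $h$ on each ball $B_{\overline c,s}$ is smaller than $p^{-m}$, whence $v(h(\overline x))=v(h(\overline c))\le m$ for all $\overline x\in B_{\overline c,s}$, which is exactly the condition checked — so the procedure halts. Conversely, if the test succeeds on every ball at some stage, then $h$ is nonvanishing on each ball and hence on all of $\Zp^n$, so the ``yes'' answer is correct, and termination never occurs when a zero exists. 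The main obstacle is the effectivity bookkeeping: one must make precise that the coefficients of the local expansions of $h$ around the dyadic (rather, $p$-adic) centers $\overline c$ are computable to arbitrary precision — this uses that $E_p$ is given by an explicit series with the factorial-denominator convergence rate, so the truncation error after $n$ terms is controlled, and that polynomial substitution preserves computability of coefficients — and that the domination test at a center is a decidable condition on finitely many valuations. Once this is set up, the compactness argument above gives termination, completing the proof.
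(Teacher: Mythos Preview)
Your strategy is correct and is essentially the paper's own: enumerate $s$, subdivide $U$ into the finitely many residue balls of radius $p^{-s}$, test each for nonvanishing, and use compactness of $U$ to guarantee that some $s$ succeeds when $g$ has no zero. Two minor points. First, the paper's per-ball test is simpler than your Taylor-domination criterion: it merely checks that some $g_i(\overline b)\not\equiv 0\pmod{p^{s+1}}$ at each integer representative $\overline b$ (the $g_i$ lie in $\Zp\{\overline X\}$, so this already certifies nonvanishing on the whole ball), and it works with the tuple $(g_1,\dots,g_k)$ directly rather than reducing to a single $h$. Second, your specific combination $h=\sum_i p^{2(i-1)}g_i^2$ does not in general vanish only at common zeros of the $g_i$: when $p\equiv 1\pmod 4$ one has $\sqrt{-1}\in\Zp$, and then $g_1=p$, $g_2=\sqrt{-1}$ give $g_1^2+p^2 g_2^2=0$ with $g_1,g_2\neq 0$. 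This is easily repaired (iterate the two-variable trick $x^2+py^2$, or simply skip the reduction), so it does not affect the rest of your argument.
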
 
\begin{proof}
We just have to check that the valuation of $g$ is bounded on $U$.
\par Let us remark that if there is $\overline{y}\in U$ such that $g(\overline{y})=0$, then for all $s\geq t$, there are $b_{0i},\cdots , b_{si}\in \{0,\cdots , p-1\}$, $i\leq n$ such that 
 $$a_i\equiv \sum_j b_{ji}p^j\mod p^{t} \mbox{ and } g\left(\sum b_{j1}p^{j},\cdots , \sum b_{jk}p^{j}\right) \equiv 0 \mod p^{s+1}.$$ 
 Actually, the $b_{ji}$'s are the digits of $b_i$ a suitable approximation of $y_i$. 
\par So, the converse states that: if there is $s\geq t$ such that for all  $b_{0i},\cdots , b_{si}\in \{0,\cdots , p-1\}$ such that for all $i\leq n$
$$a_i\equiv \sum_j b_{ji}p^j\mod p^{t},$$
  we have
  $$g\left(\sum_j b_{j1}p^{j},\cdots , \sum_j b_{jk}p^{j}\right) \not\equiv 0 \mod p^{s+1},$$
 then, there is no $\overline{y}\in U$ such that $g(\overline{y})=0$.
\par  But these last conditions are recursively enumerable. The following algorithm does the job:
\begin{algo}\label{no root}
Given $U=\overline{a}+p^t\Zp^n, g=(g_1,\cdots ,g_k)$.
\par Proceed to an enumeration of all $s\in \N$, $s\geq t$. Check if for all $b_1,\cdots ,b_n\in \Z/p^s\Z$ with $v(\overline{a}-\overline{b})\geq t$, $g_i(\overline{b})\not\equiv 0\mod p^{s+1}$ for all $i$. If yes, return true. Otherwise go to the next step.
 \end{algo}
 This completes the proof of the lemma.
\end{proof}
Let us remark that this algorithm never stops if the system has a root in $U$. Note that we are able to do the basic computations in the above lemma (and in the rest of the paper) effectively. Indeed, we are able to compute the valuation of an exponential polynomial evaluated at a given integer.
 \par Let $f\in \Z[x_1,\cdots x_n,e^{px_1},\cdots ,e^{px_n}]$ and a tuple of integer $\overline{t}$. Then, we are able to determine if $f(\overline{t})=0$ and compute the valuation of $f(\overline{t})$:
Let us remark that we can assume that $f(\overline{t})$ is a finite sum of the form
$$ f(\overline{t}) = s \sum a_i e^{pi}$$
where the $a_i$'s are integer and $s\in \Zp^*$. As, $e^{p}$ is transcendental over $\Q$ (theorem due to Malher \cite{Malher}), $f(\overline{t})=0$ iff $a_i=0$ for all $i$. If this is not the case, using the Taylor expansion, we can determine the remainder of $f(\overline{t})$ modulo $p^n$ for all $n$. The valuation of $f(\overline{t})$ is determined by the smallest $n$ such that $f(\overline{t})\not\equiv 0 \mod p^n$.

\par Let $\psi\equiv \exists \overline{x}\ F_P(\overline{x})=0$ be a positive exisential formula. The above lemma gives us a procedure which stops and returns true if $\psi$ is false in $\Zp$. It runs forever if $\psi$ is true in $\Zp$. To complete the proof of the decidability of the set of positive existential formula, we just have to give a procedure that returns true whenever $F_P$ has a root in $\Zp$ (and may runs forever otherwise). Actually, using the desingularization theorem, we can almost already determine if $F_P$ has a root in $\Zp$:
\par Assume that $F_P$ admits a root $\overline{a}$ in $\Zp^n$, then we know by the corollary \ref{Desingularization exponential polynomial} that there are $F_{P_1},\cdots, F_{P_n}$ and $\overline{b}$ such that $F_P(\overline{b})=0$ and $\overline{b}$ is a non-singular zero of the system $G = (  F_{P_1},\cdots, F_{P_n})$. Now, by Hensel's lemma, we can check if the system $G$ has a non-singular solution in $\Zp$. However, there is two issues: first, the system $G$ is not given explicitely in the proof of the desingularization theorem (i.e. may not be computable). Second, there is no guarantee that the non-singular solution of the system $G$ computed by Hensel's lemma is indeed a solution of $F_P$.
\par The two issues can be solved if $P$ is in the ideal generated by $P_1,\cdots ,P_n$. Indeed, in that case, $\overline{b}$ determines a zero of each $P_i$:
 $$P_i(b_1,\cdots, b_n, e^{pb_1},\cdots , e^{pb_n})=0\mbox{ iff } F_{P_i}(b_1,\cdots , b_n)=0.$$
And conversely, any zero $b_1,\cdots, b_n, e^{pb_1},\cdots , e^{pb_n}$ of the system $P_1,\cdots, P_n$ is a zero of $P$. Furthermore, we can find $P_1,\cdots, P_n$ by enumerating all polynomials in $\Z[\overline{X}]$ (see later for more details).
\par The next lemma will give us exactly what we need : up to multiplication by a polynomial $Q$ (such that $Q$ does not vanish at $(b_1,\cdots, b_n, e^{pb_1},\cdots , e^{pb_n})$), $P$ is in the ideal generated by some $Q_1,\cdots , Q_n$ like above.
 The key point of this lemma is that we can determine the transcendence degree of  $\Q (b_1,\cdots, b_n, e^{pb_1},\cdots , e^{pb_n})$ over $\Q$. This is where Schanuel's conjecture turns out to be helpful.
\par  The first thing to observe is that as $\overline{b}$ is a non-singular zero of the system $G$, we certainly have that
 $$ \mbox{trdeg}_\Q \Q(b_1,\cdots, b_n, e^{pb_1},\cdots , e^{pb_n}) \leq n.$$
 We will actually need equality which can be obtained using a $p$-adic version of Schanuel's conjecture:
 
 \begin{conj}[$p$-adic Schanuel's Conjecture]
 Let $n\geq 1$ and $t_1,\cdots , t_n$ in $\Cp$ (with valuation at least $1/(p-1)$) linearly independent over $\Q$.
 \par Then, the field $\Q (t_1,\cdots ,t_n,e^{t_1},\cdots , e^{t_n})$ has transcendence degree at least $n$ over $\Q$.
 \end{conj}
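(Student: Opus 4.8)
Since the statement in question is a \emph{conjecture} --- the $p$-adic counterpart of Schanuel's conjecture, which is itself wide open over $\C$ --- there is no proof to propose in the usual sense; what follows is the line of attack one would pursue, together with the obstruction that keeps it out of reach. The natural framework is transcendence theory: one wants to show that if $t_1,\dots,t_n\in\Cp$ have valuation $>1/(p-1)$ and are $\Q$-linearly independent, then among the $2n$ numbers $t_1,\dots,t_n,e^{t_1},\dots,e^{t_n}$ at least $n$ are algebraically independent over $\Q$. The first reduction is the familiar one: by an approximation argument it suffices to treat the case where the $t_i$ lie in a finite extension of $\Qp$, after which one runs the standard auxiliary-function machinery in the $p$-adic analytic setting.

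The plan of the transcendence argument is the usual four-step template transported to $\Cp$. Assuming for contradiction that $\mathrm{trdeg}_\Q\,\Q(t_1,\dots,t_n,e^{t_1},\dots,e^{t_n})\le n-1$, one uses Siegel's lemma to build a nonzero auxiliary polynomial $Q$ in $2n$ variables with controlled integer coefficients so that the $p$-adic analytic function $F(z_1,\dots,z_n)=Q\bigl(z_1,\dots,z_n,e^{z_1},\dots,e^{z_n}\bigr)$, together with many of its derivatives, is $p$-adically very small at the points $\sum m_i t_i$ for $m_i$ ranging over a large box --- here one exploits that $e^z$ converges and takes values in $1+\mathfrak{M}_{\Cp}$ on the disk of valuation $>1/(p-1)$, so the group law $e^{z+w}=e^z e^w$ is available. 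A $p$-adic size estimate (maximum modulus on a polydisk) then forces $F$ to vanish to high multiplicity at these points. Finally a zero estimate --- the $p$-adic analogue of the Masser--Philippon multiplicity estimates for $\mathbb{G}_a^n\times\mathbb{G}_m^n$ --- contradicts this vanishing unless the data were degenerate, and iterating with algebraic-independence measures of Philippon--Nesterenko type would upgrade ``transcendental'' to ``transcendence degree $\ge n$.''

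The partial evidence one can actually establish this way mirrors the archimedean case: when $t_1,\dots,t_n$ are algebraic the statement is the $p$-adic Lindemann--Weierstrass theorem, which follows from Mahler's $p$-adic transcendence results and their algebraic-independence strengthenings; the case $n=1$ with $t_1$ algebraic irrational is the $p$-adic Gelfond--Schneider theorem. These cover transcendence degree $\ge 1$ in favorable configurations.

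The \textbf{main obstacle} is precisely the obstacle in the complex case, and nothing $p$-adic makes it easier: there is no known way to extract from a single auxiliary function enough independent vanishing conditions to push the transcendence degree all the way up to $n$ when the $t_i$ are themselves transcendental and algebraically entangled with the $e^{t_i}$. The zero-estimate step degenerates exactly in the generic situation the conjecture describes, and the ideal-theoretic bookkeeping needed to convert analytic smallness into an algebraic-independence conclusion of the right \emph{dimension} is unavailable. Thus the honest status of the plan is: the reductions and the first two analytic steps go through verbatim over $\Cp$, but the multiplicity estimate in the generic case is the wall --- and this $p$-adic Schanuel statement should be expected to remain open at least as long as Schanuel's conjecture itself, which is why the present paper takes it as a hypothesis rather than a theorem.
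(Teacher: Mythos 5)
You are correct that this statement is an open conjecture: the paper offers no proof of it and uses it only as a hypothesis for the decidability results, exactly as you observe. Your assessment of its status (and of why the standard transcendence machinery does not reach it) is consistent with the paper, so there is nothing to compare against.
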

  
Using the $p$-adic version of Schanuel's conjecture, like in \cite{Macintyre-Wilkie}, we can prove:
\begin{lemma}\label{lemma algo}
Let $n\geq 1$, $P\in\Z[x_1,\cdots , x_{2n}]$.
\par Assume that $F_P = P(x_1,\cdots , x_n,e^{px_1},\cdots , e^{px_n})$ has a zero and that for all zeros $\overline{a}$ of $F_P$, its components $a_1,\cdots , a_n$ are $\Q$-linearly independent.
\par Then, there exist $b_1,\cdots, b_n\in \Zp$ and $Q,Q_1,\cdots ,Q_n,S_1,\cdots S_n\in \Z[x_1,\cdots , x_{2n}]$ such that $\overline{b}$ is a zero of $F_P$ and a non-singular zero of $G=(F_{Q_1},\cdots, F_{Q_n})$, that $F_Q(\overline{b})\not=0$ and that $QP=\sum_i Q_iS_i$.
\end{lemma}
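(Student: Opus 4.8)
The plan is to feed the exponential polynomial $F_P$ into the desingularization corollary, use the $p$-adic Schanuel conjecture to compute a transcendence degree, and then convert that into an ideal membership by the Jacobian criterion.

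First I would apply Corollary~\ref{Desingularization exponential polynomial}: since $F_P$ has a zero in $\Zp^n$, there exist $P_1,\dots,P_n\in\Z[x_1,\dots,x_{2n}]$ and $\overline b\in\Zp^n$ with
$$P(\overline b,E_p(\overline b))=P_1(\overline b,E_p(\overline b))=\cdots=P_n(\overline b,E_p(\overline b))=0\neq\det J_{(P_1,\dots,P_n)}(\overline b).$$
Put $\beta=(b_1,\dots,b_n,E_p(b_1),\dots,E_p(b_n))\in\Cp^{2n}$, so that $P(\beta)=P_i(\beta)=0$ for all $i$. I will simply take $Q_i:=P_i$; then $\overline b$ is by construction a zero of $F_P$ and a non-singular zero of $G=(F_{Q_1},\dots,F_{Q_n})$, and it only remains to produce $Q,S_1,\dots,S_n\in\Z[x_1,\dots,x_{2n}]$ with $F_Q(\overline b)\neq0$ and $QP=\sum_iQ_iS_i$. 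Note that since $\overline b$ is a zero of $F_P$, the standing hypothesis forces $b_1,\dots,b_n$ to be $\Q$-linearly independent.

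Next I would pin down $d:=\mathrm{trdeg}_\Q\,\Q(\beta)$. On the one hand $d\le n$ by the observation recorded just before the lemma (non-singularity of $\overline b$ in $G$). On the other hand, $pb_1,\dots,pb_n$ (respectively $4b_1,\dots,4b_n$ when $p=2$) are $\Q$-linearly independent, each of valuation $\ge1>1/(p-1)$ (respectively $\ge2>1$), and $\Q(\beta)=\Q\big(pb_1,\dots,pb_n,E_p(b_1),\dots,E_p(b_n)\big)$; hence the $p$-adic Schanuel conjecture gives $d\ge n$, so $d=n$. This is the step that genuinely needs Schanuel: the upper bound alone does not suffice for the conclusion below.

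Finally I would extract the ideal relation. Let $\mathfrak p=\{R\in\Z[x_1,\dots,x_{2n}]\otimes\Q: R(\beta)=0\}\subseteq\Q[x_1,\dots,x_{2n}]$; this is a prime ideal, $\Q[x_1,\dots,x_{2n}]/\mathfrak p\cong\Q[\beta]$ has fraction field of transcendence degree $d=n$, so $\mathfrak p$ has height $n$ and $A:=\Q[x_1,\dots,x_{2n}]_{\mathfrak p}$ is a regular local ring of dimension $n$. By the chain rule, $J_{(P_1,\dots,P_n)}(\overline b)=A_X+c\,A_Y\,\mathrm{diag}\big(E_p(b_1),\dots,E_p(b_n)\big)$ where $A_X,A_Y$ are the matrices of the $X$- and $Y$-partials of $P_1,\dots,P_n$ at $\beta$ and $c=p$ (resp. $4$); its non-vanishing then forces the $2n$-dimensional gradient vectors $\nabla P_1(\beta),\dots,\nabla P_n(\beta)$ to be $\Cp$-linearly independent (if a combination of them vanishes, pairing with $c$ times the $Y$-block plus the $X$-block shows the corresponding combination of rows of $J_{(P_1,\dots,P_n)}(\overline b)$ vanishes). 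Equivalently, the images of $P_1,\dots,P_n$ in the cotangent space $\mathfrak pA/\mathfrak p^2A$ are independent over the residue field, so by the Jacobian criterion $P_1,\dots,P_n$ is a regular system of parameters of $A$ and $(P_1,\dots,P_n)A=\mathfrak pA$. Since $P\in\mathfrak p$, we may write $P=\sum_ig_iP_i$ in $A$; clearing denominators yields $s\in\Q[x_1,\dots,x_{2n}]\setminus\mathfrak p$ and $\tilde h_i\in\Q[x_1,\dots,x_{2n}]$ with $sP=\sum_i\tilde h_iP_i$ in $\Q[x_1,\dots,x_{2n}]$, and multiplying by a suitable positive integer $m$ gives $Q:=ms$ and $S_i:=m\tilde h_i$ in $\Z[x_1,\dots,x_{2n}]$ with $QP=\sum_iQ_iS_i$ and $F_Q(\overline b)=ms(\beta)\neq0$ (as $s\notin\mathfrak p$). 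The only delicate input is the transcendence-degree equality via Schanuel; the rest is the analytic Hensel/implicit-function content already packaged in Corollary~\ref{Desingularization exponential polynomial} together with a routine application of the Jacobian criterion in a regular local ring.
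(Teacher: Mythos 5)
Your proof is correct, but for the key algebraic step it takes a genuinely different route from the paper. Both arguments share the same two inputs: Corollary~\ref{Desingularization exponential polynomial} to produce $P_1,\dots,P_n$ and $\overline b$, and the $p$-adic Schanuel conjecture (applied to the $\Q$-linearly independent $pb_i$, of valuation $\ge 1>1/(p-1)$) to upgrade $\mathrm{trdeg}_\Q\,\Q(\beta)\le n$ to an equality. They then diverge. The paper invokes Claim~\ref{ideal generators} (stated without proof) to obtain a \emph{new} system $Q_1,\dots,Q_n$ generating $QI$, and must then re-establish that $\overline b$ is a non-singular zero of this new system by differentiating the relations $QP_i=\sum_j S_{ij}Q_j$. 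You instead keep $Q_i:=P_i$ and prove the ideal membership directly: height of $\mathfrak p$ equals $n$ by the dimension formula, $\Q[\overline x]_{\mathfrak p}$ is regular local of dimension $n$, and the non-vanishing of $\det J_{(F_{P_1},\dots,F_{P_n})}(\overline b)$ forces the classes of $P_1,\dots,P_n$ in $\mathfrak pA/\mathfrak p^2A$ to be independent, hence a regular system of parameters, hence $(P_1,\dots,P_n)A=\mathfrak pA\ni P$; clearing denominators gives $Q$ and the $S_i$. Note that your ``equivalently'' only needs the easy direction (gradient independence implies cotangent independence), which follows from the well-definedness of $f\mapsto\nabla f(\beta)$ on $\mathfrak p/\mathfrak p^2$ together with $P_i(\beta)=0$, so no appeal to the full Jacobian criterion is required there. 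Your version is more self-contained (it bypasses the unproved Claim~\ref{ideal generators} and the transfer-of-non-singularity computation) and delivers exactly what Algorithm~\ref{existence of root} consumes, namely one identity $QP=\sum_iQ_iS_i$ with the Hensel data; what it does not give is the stronger global statement that $QI$ itself is $n$-generated, which the paper's route provides and reuses in the proof of Lemma~\ref{L_pec crucial lemma}, but which is not needed for the present lemma.
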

This lemma guarantees the existence of a system such that the non-singular zeros of this system are roots of $F_P$.
\begin{proof}
Let $P_1,\cdots, P_n$ given by corollary \ref{Desingularization exponential polynomial} and $\overline{b}\in V(F_P)\cap V^{ns}(F_{P_1},\cdots , F_{P_n})$.
\par By the above discussion, the transcendence degree of $\Q (b_1,\cdots, b_n, e^{pb_1},\cdots , e^{pb_n}) $ over $\Q$ is exactly $n$. We apply the following claim with $m=2n$, $r=n$ and $I= \{h\in\Z[x_1,\cdots, x_{2n}]\mid$ $h(b_1,\cdots, b_n, ^{pb_1},\cdots , e^{pb_n})=0\}$:
\begin{claim}\label{ideal generators}Let $m,r\geq 1$, $I$ prime ideal of $\Z[x_1,\cdots , x_m]$ with $I\cap\Z=\{ 0\}$ and trdeg$_\Q$ Frac $(\Z[x_1,\cdots ,x_m]/I)=r$.
Then, there is $Q\in \Z[\overline{x}]$ with $Q\notin I$ such that $QI$ is generated by $m-r$ elements.
\end{claim}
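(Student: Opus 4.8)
The plan is to work over $\Q$ first, localizing to reduce $I$ to a prime ideal of a polynomial ring over a field, and then descend back to $\Z$ at the very end. Set $A = \Z[x_1,\ldots,x_m]$, let $\kappa = \operatorname{Frac}(A/I)$, and recall by hypothesis that $I\cap\Z = \{0\}$, so $\Q\hookrightarrow \kappa$ and $\operatorname{trdeg}_\Q \kappa = r$. Write $B = \Q[x_1,\ldots,x_m]$ and let $\tilde I = I\cdot B$ be the extension of $I$ to $B$; since $I$ is prime with $I\cap\Z=\{0\}$, one checks $\tilde I$ is prime, $\tilde I\cap A = I$ (localization at the multiplicative set $\Z\setminus\{0\}$), and $B/\tilde I$ has the same fraction field $\kappa$, hence the same transcendence degree $r$ over $\Q$.

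The first key step is a standard Noether-normalization-type argument: after a $\Q$-linear change of coordinates on $B$ we may assume $x_1,\ldots,x_r$ are algebraically independent modulo $\tilde I$ and that $B/\tilde I$ is a finite (in particular integral) extension of $C := \Q[x_1,\ldots,x_r]$. Then $\kappa$ is a finite extension of $L := \Q(x_1,\ldots,x_r)$. Now localize at the prime $(0)$ of $C$, i.e. base change $-\otimes_C L$: the ring $B_L := L[x_{r+1},\ldots,x_m]$ is a polynomial ring in $m-r$ variables over the field $L$, it is a PID-like situation only when $m-r=1$, so instead I use that $B_L/\tilde I B_L$ is a finite $L$-algebra which is a domain, hence a field, equal to $\kappa$; thus $\tilde I B_L$ is a maximal ideal of $B_L$. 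The second key step is that a maximal ideal $\mathfrak{m}$ of $L[y_1,\ldots,y_{m-r}]$ with residue field finite over $L$ is generated by $m-r$ elements — this is the classical fact that the local ring at a closed point of $\mathbb{A}^{m-r}_L$ is regular of dimension $m-r$, so its maximal ideal needs exactly $m-r$ generators, and by Nakayama these lift to $m-r$ generators of $\mathfrak{m}$ itself after a further localization. Concretely, pick generators $\bar g_1,\ldots,\bar g_{m-r}$ of $\tilde I B_L$ and clear denominators: there is $0\neq d\in C$ with $d\bar g_i \in B$, so the $g_i := d\bar g_i$ lie in $\tilde I$ and generate $\tilde I$ after inverting $d$, i.e. $\tilde I[1/d] = (g_1,\ldots,g_{m-r})[1/d]$ inside $B[1/d]$.

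The third step is to clear the remaining denominators and descend to $\Z$. From $\tilde I B[1/d] = (g_1,\ldots,g_{m-r})B[1/d]$ we get: each generator $h$ of $\tilde I$ satisfies $d^{N} h = \sum_i s_i g_i$ for some $N$ and $s_i \in B$; and each $g_i$, being in $\tilde I = I\cdot B$, is a $B$-combination of elements of $I$. Multiplying through by a common integer denominator $e$ for all the rational coefficients appearing in the finitely many $g_i$, $s_i$, and in the expression of the $g_i$ in terms of $I$, and replacing $d$ by $e\cdot d$ and the $g_i$ by $e\cdot g_i$ (staying in $A$), we arrive at the desired statement with $Q$ taken to be the resulting element $d\in A$: one verifies $Q\notin I$ because $d$ was chosen in $C\setminus\{0\}$ and $C\cap \tilde I=\{0\}$ (algebraic independence of $x_1,\ldots,x_r$ mod $\tilde I$), hence $Q\notin \tilde I\cap A = I$. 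Then $Q\cdot I \subseteq (g_1,\ldots,g_{m-r})$ in $A$ by construction, and conversely each $g_i$ is an $A$-multiple-combination landing in $Q^k I$ for suitable $k$ — absorbing the power of $Q$ into the statement (or noting $Q I$ is already generated by the $g_i$ once one also multiplies the $g_i$ by $Q$), one gets that $QI$ is generated by the $m-r$ elements $Q g_1,\ldots,Q g_{m-r}$.

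The main obstacle I expect is the bookkeeping of the descent in the third step: making sure that the single multiplier $Q$ simultaneously clears denominators in the coefficients expressing the ideal generators, handles the flatness/localization passage $A \to B \to B[1/d] \to B_L$ without losing primeness or changing $I\cap(\text{base})$, and that $Q\notin I$ survives all these multiplications. The geometric input (regularity of the local ring of $\mathbb{A}^{m-r}_L$ at a closed point, giving exactly $m-r$ generators for the maximal ideal) is clean and classical; the purely commutative-algebra descent from a statement over the function field $L$ to a statement over $\Z$ with an explicit "denominator" $Q$ is where the care is needed, but it is routine localization theory once set up correctly.
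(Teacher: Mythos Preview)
The paper does \emph{not} prove this claim: it is stated inside the proof of Lemma~\ref{lemma algo} and then immediately used, the argument being taken over from Macintyre--Wilkie. So there is nothing in the paper to compare your argument against line by line.

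Your approach is correct and is essentially the standard one. Two remarks.

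First, the Noether normalisation is more than you need. It is enough to pick (after reordering) $x_1,\dots,x_r$ algebraically independent modulo $\tilde I$ and to localise $B=\Q[x_1,\dots,x_m]$ at the prime $\tilde I$ directly: $B_{\tilde I}$ is a regular local ring of dimension $m-r$, so $\tilde I B_{\tilde I}$ is generated by $m-r$ elements; then clear the denominators exactly as you do. Your route through $L=\Q(x_1,\dots,x_r)$ and the maximal ideal of $L[x_{r+1},\dots,x_m]$ reaches the same conclusion, only with an extra change-of-variables step that you then have to undo when descending to~$\Z$.

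Second, your last paragraph is tangled because the phrase ``$QI$ is generated by $m-r$ elements'' is ambiguous, and in its literal reading it is actually \emph{false}. In a domain, $QI=(p_1,\dots,p_{m-r})$ forces each $p_i=Qh_i$ with $h_i\in I$ and then $I=(h_1,\dots,h_{m-r})$; but, for instance, the prime ideal of the monomial curve $t\mapsto(t^3,t^4,t^5)$ in $\Z[x,y,z]$ has $m-r=2$ yet needs three generators. What the paper actually \emph{uses} (look at how the claim is applied: ``$QP_i=\sum S_{ij}Q_j$'' with $Q_j\in I$) is the weaker statement
\[
Q\notin I,\qquad Q_1,\dots,Q_{m-r}\in I,\qquad Q\cdot I\subseteq (Q_1,\dots,Q_{m-r}),
\]
i.e.\ $I$ becomes $(m-r)$-generated after inverting $Q$. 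Your argument already delivers this cleanly at the line ``$Q\cdot I\subseteq (g_1,\dots,g_{m-r})$ in $A$ by construction''; the subsequent attempt to squeeze out the literal equality $QI=(Qg_1,\dots,Qg_{m-r})$ should simply be dropped.
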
 
As trdeg Frac$(\Z[\overline{x}]/I)=$ trdeg$_\Q \Q(b_1,\cdots , b_n E_p(b_1),\cdots , E_p(b_n))=n$ (by Schanuel's conjecture), we can apply the claim. 
\par Let $Q_1,\cdots , Q_n$ be generators of $QI$. Then, the properties of the lemma are satisfied except that $\overline{b}$ may be a singular zero of our system. But, as $P_i\in I$, $QP_i = \sum S_{ij}Q_j$ for some $S_{ij}\in \Z[\overline{x}]$. Using the chain rule on this relation, we find that
$$ F_Q(\overline{b}) \cdot \frac{\partial F_{P_i}}{\partial x_j}(\overline{b}) = \sum_k F_{S_{ik}}(\overline{b}) \frac{\partial F_{Q_k}}{\partial x_j}
 (\overline{b}). $$
As $F_Q(\overline{b}) \not=0$, we deduce that $\overline{b}$ is a non-singular zero of $G$. 
\end{proof}

\begin{prop}\label{positive existential theory}
If Schanuel's conjecture is true, the positive existential theory of the structure $(\Zp, +,\cdot ,0,1, E_p)$ is decidable.
\end{prop}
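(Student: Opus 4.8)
The plan is to combine the two halves of the decision procedure already assembled in this section. Recall that by Lemma \ref{universal formula stop} (Algorithm \ref{no root}) we have a semi-decision procedure that halts and returns \emph{true} precisely when a given positive existential sentence $\psi\equiv \exists\overline{x}\ F_P(\overline{x})=0$ is \emph{false} in $\Zp$. So it suffices to exhibit a complementary semi-procedure that halts and returns \emph{true} precisely when $\psi$ is \emph{true} in $\Zp$; running the two in parallel (dovetailing their steps) then decides $\psi$, and since every existential $\Lexp$-sentence is effectively a disjunction of sentences of the stated shape and the $x^2+py^2$ trick reduces a conjunction of equalities to a single equation $F_P(\overline{x})=0$, this decides the whole positive existential theory.

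The "true"-halting procedure is built from Lemma \ref{lemma algo} together with the analytic Hensel's lemma (Theorem \ref{Hensel}). First I would deal with the linear-independence hypothesis of Lemma \ref{lemma algo}: if some zero $\overline{a}$ of $F_P$ has $\Q$-linearly dependent components, there is an integer relation $\sum n_i a_i = 0$, which lets one eliminate a variable and reduce to an exponential polynomial in fewer variables (after substituting $x_j = -\tfrac1{n_j}\sum_{i\ne j} n_i x_i$ and clearing denominators), so by induction on $n$ we may assume all zeros of $F_P$ have $\Q$-independent components — the base case $n=0$ being trivial. Under that assumption Lemma \ref{lemma algo} (which is where the $p$-adic Schanuel conjecture enters) guarantees the existence of $Q,Q_1,\dots,Q_n,S_1,\dots,S_n\in\Z[\overline{x}]$ and $\overline{b}\in\Zp^n$ with $\overline{b}$ a zero of $F_P$, a non-singular zero of $G=(F_{Q_1},\dots,F_{Q_n})$, with $F_Q(\overline{b})\ne 0$ and $QP=\sum_i Q_iS_i$. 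The procedure then enumerates all finite tuples of polynomials $(Q,Q_1,\dots,Q_n,S_1,\dots,S_n)\in\Z[\overline{x}]^{2n+2}$; for each tuple it first checks the purely algebraic identity $QP=\sum_i Q_iS_i$ (decidable, since it is an identity of polynomials over $\Z$), and if that holds it runs the Hensel-search of the introduction on the system $G$: enumerate $\overline{k}\in\Z^n$ and test whether $\det J_G(\overline{k})\ne 0$ and $v(G(\overline{k}))>2v(\det J_G(\overline{k}))$ and $v(F_Q(\overline{k}))\le 2v(\det J_G(\overline{k})) - 1 - v(F_Q(\overline k))$ — more carefully, one also forces $\overline k$ close enough to a root that $F_Q$ stays nonzero there, using that $v(F_Q(\overline k))$ is computable (as noted, since $e^p$ is transcendental, values of exponential polynomials at integer points have effectively computable valuations). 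If all tests succeed, return \emph{true}.

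The correctness of this procedure is the point where care is needed. When it returns \emph{true}, Theorem \ref{Hensel} produces a genuine $\overline{c}\in\Zp^n$ with $G(\overline{c})=0$ and $v(\overline c-\overline k)$ large; choosing $\overline k$ so close to the Hensel root that the continuity of $F_Q$ (an analytic function on $\Zp$) keeps $F_Q(\overline c)\ne 0$, the relation $QP=\sum_i Q_iS_i$ gives $F_Q(\overline c)F_P(\overline c)=\sum_i F_{Q_i}(\overline c)F_{S_i}(\overline c)=0$, hence $F_P(\overline c)=0$ and $\psi$ is true. Conversely, if $\psi$ is true then (after the inductive reduction) Lemma \ref{lemma algo} supplies a tuple passing the algebraic test together with a non-singular zero $\overline b$ of $G$ with $F_Q(\overline b)\ne 0$; since $\Z^n$ is dense in $\Zp^n$ and both $v(\det J_G)$ and $v(F_Q)$ are locally constant near $\overline b$ while $v(G)$ can be made arbitrarily large, some integer point $\overline k$ near $\overline b$ satisfies the Hensel inequality and keeps $F_Q$ nonzero, so the search halts. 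The main obstacle — and the reason Schanuel is indispensable — is precisely Lemma \ref{lemma algo}: without knowing $\mathrm{trdeg}_\Q\Q(b_1,\dots,b_n,e^{pb_1},\dots,e^{pb_n})=n$ we could not invoke Claim \ref{ideal generators} to pass from the (non-computable, possibly singular-at-$\overline b$) desingularizing system of Corollary \ref{Desingularization exponential polynomial} to an explicitly enumerable system $G$ whose non-singular zeros are forced to be zeros of $F_P$; everything else is bookkeeping and parallel search.
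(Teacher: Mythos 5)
Your proposal is correct and follows the paper's proof essentially step for step: reduce a positive existential sentence to a single equation $F_P(\overline{x})=0$, dovetail the ``false''-halting procedure of Lemma \ref{universal formula stop} with a ``true''-halting search that enumerates the data $(Q,Q_1,\dots,Q_n,S_1,\dots,S_n)$ supplied by Lemma \ref{lemma algo}, verifies the polynomial identity $QP=\sum_i Q_iS_i$, finds an integer Hensel witness by density of $\Z^n$ in $\Zp^n$, and handles $\Q$-linear dependences among the coordinates of roots by enumerating integer relations and eliminating variables. The only (harmless) deviations are that you certify $F_Q\neq 0$ at the Hensel root via local constancy of $v(F_Q)$ near the integer witness, whereas the paper instead checks (via Lemma \ref{universal formula stop}) that $F_Q$ has no zero in the whole ball $U$, and that your displayed test $v(F_Q(\overline{k}))\le 2v(\det J_G(\overline{k}))-1-v(F_Q(\overline{k}))$ is circular as written --- though the surrounding prose states the intended condition (take the Hensel radius parameter $r$ with $v(\det J_G(\overline{k}))+r>v(F_Q(\overline{k}))$) correctly.
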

\begin{proof}
Let $\varphi$ be a positive existential sentence of our theory. Without loss of generality, we can assume that $$\varphi\equiv \exists x_1\cdots x_n\ F_P(\overline{x})=0$$
for some $P\in \Z[X_1,\cdots , X_{2n}]$.
\par First, we give an algorithm that returns true if the sentence is satisfied (and never stop otherwise). We are given $F_P$ and we want to know if this function admits a solution in $\Zp^n$. Assume that this is the case. Then, lemma \ref{lemma algo} gives us  the existence of exponential polynomial functions $G= (F_{Q_1}, \cdots, F_{Q_n})$ such that any non-singular zero of $G$ is a zero of $F_P$. So, proceed to an enumeration over all possible system $G$ and polynomials $Q$, $S_{ij}$ like in the lemma. Using Hensel's lemma, we can determine if $G$ has a non-singular root in an open $U$.  If our sentence is satisfied, there exists such an open set $U$ which contains a solution of $G$ and  does not contain a root of $F_Q$. So, we proceed to an enumeration of all open set of the type $U=\overline{a}+p^t\Zp^n$ for all $\overline{a}\in \N^n, t\in \N$ and on each such a set we check if the conditions of Hensel's lemma are satisfied for some tuple in $U$ and if $F_Q$ has no root in $U$ (via lemma \ref{universal formula stop}).
\par We give now the algorithm. If Schanuel's conjecture is true, this algorithm returns true whenever $F_P$ has at least one root in $\Zp^n$ and the components of any of its roots are linearly independent. If these conditions are not satisfied, this algorithm may run forever.
\begin{algo}\label{existence of root}
Given $n\geq 1, P\in\Z[x_1,\cdots , x_{2n}]$.\\
Proceed to an enumeration of $Q,Q_1,\cdots ,Q_n,S_1,\cdots, S_n\in$ $\Z[x_1, \cdots ,x_{2n} ]$  and all $a_1,\cdots , a_n,t,s \in \N, s\geq t$\\
Given such a $3n+3$-uple, first check if $QP=\sum Q_iS_i$. If not go to the next step (of the enumeration).\\
Otherwise, check if
$$\mbox{det }\left( \deriv{F_{Q_i}}{x_j}\right)(\overline{a})\not=0,$$
 and if $$v(F_{Q_i})(\overline{a})> v\Big(\mbox{det }\left( \deriv{F_{Q_i}}{x_j}\right)(\overline{a})\Big)+t.$$ 
 If not, go to the next step.\\
If this is the case (there is a root of the system $G$ in $U:=\overline{a}+p^t\Zp^n$), for all $b_{ij}\in \{0,\cdots, p-1\}$, where $0\leq  j \leq s$, $1\leq i\leq n$, let $b_i=\sum b_{ij}p^j$  check if whenever $v(\overline{b}-\overline{a})\geq t$, we have 
$$F_{Q}(\overline{b})\not\equiv 0 \mod p^{s+1}.$$
If yes ($F_Q$ does not admit root in $U$), return true. Otherwise, go to the next step.
\end{algo}
Finally, let us recall that in the above algorithm, we need to assume that the components of any root of $F_P$ are linearly independent. But, without loss of generality, we can  assume that this is the case:
\par  Indeed, let $F_P$ be an exponential polynomial. We proceed to an enumeration over all possible relations of $\Z$-linear dependence between the variables and we run in parallel the following procedure:
\par For each relation, we remove one of the variable according to this relation. Let $\widetilde{F_P}$ be the exponential polynomial obtained after this transformation. We remark that $\widetilde{F_P}$ has a root iff $F_P$ has a root that satisfies the $\Z$-linear relation used to construct $\widetilde{F_P}$. We apply the algorithm \ref{existence of root} with entry $\widetilde{F_P}$. If the components of any root of $\widetilde{F_P}$ are linearly independent, then algorithm \ref{existence of root} returns true (in the case where $\widetilde{F_P}$ has a root) and the truth of our formula is determined. If $\widetilde{F_P}$ has a root with components linearly dependent, we restart the procedure with $F_P:=\widetilde{F_P}$.
\par This procedure stops and returns true in the case where $F_P$ has a root in $\Zp$.

\par Now, we can determine the truth of a positive existential sentence : we run in parallel the algorithm \ref{no root} and algorithm \ref{existence of root} with entries $P$. If $F_P$ has no root in $\Zp^n$, the algorithm \ref{no root} stops and we return false. If not, then $F_P$ has a root and algorithm \ref{existence of root} stops, in which case, we return true.
\end{proof} 
\begin{Remark}
It is not hard to see that the algorithms \ref{no root} and \ref{existence of root} can be adapted to determine the truth of positive existential sentences in $(\mathcal{O}_{K},+,\cdot , 0, 1, E_p)$ where $K$ is a finite algebraic extension of $\Qp$. 
\end{Remark}

Let us also remark that the algorithm \ref{existence of root} can be easily modified to take as entries general existential sentences. Indeed, such a sentence has the form 
$$ \exists x_1 \cdots \exists x_n  F_P(\overline{x})=0 \wedge \bigwedge_j F_{R_j}(\overline{x})\not=0.$$
Therefore, we just have to check that $F_{R_j}$ has no root in $U$ (exactly like we did for $F_Q$). However, it is not clear that we can find a procedure that stops if such a sentence is false. One could proof that the negation of the above formula is equivalent to a (computable) $\mathcal{L}_{exp}$-existential sentence. It is not known if such a model-completeness result exists. In order to avoid this issue, we extend proposition \ref{positive existential theory} to the expansion of language $\mathcal{L}_{pEC}$ of \cite{Macintyre4}. In this language, the theory of $\Zp$ is effectively model-completene. The decidability of the full theory is then clear: apply the above procedure in parallel for the sentence and (the $\mathcal{L}_{pEC}$ -existential sentence equivalent to) its negation. One of the two procedure has to stop and therefore determines the truth of the sentence in $\Zp$.

\section{Decidability of the $\mathcal{L}_{pEC}$-sentences}\label{Decidability of the LPEC-sentences}

\par First, let us recall the definition of the language used in \cite{Macintyre4} to obtain the model-completeness result. Let $K=\Qp(\alpha)$ be a finite algebraic extension of degree $d$ (where we assume $\alpha$ has nonnegative valuation and is algebraic over $\Q$). We want to expand $\mathcal{L}_{exp}$ by function symbols so that the function $E_p:\Zp(\alpha)\longrightarrow \Zp(\alpha)$ is definable in our language. As usual, one can define the ring $\Zp (\alpha)$ in $\Zp$ by identifying the former ring with its structure of $\Zp$-module. So, it is sufficient to be able to decompose $E_p(y)$ in the basis of $K$ over $\Qp$ for all $y\in \Zp(\alpha)$. By the multiplicative property of $E_p$ we actually just need to decompose $E_p(x\alpha^i)$ for all $i<d$ and for all $x\in \Zp$. We set 
$$E_p(x\alpha^i)= c_{0,i}(x)+c_{1,i}(x)\alpha+\cdots + c_{d-1,i}(x)\alpha^{d-1}.$$
The elements $c_{i,j}(x)$ define functions from $\Zp\rightarrow \Zp$ such that $c_{i,j}(X)\in \Zp\{X\}$. We call these functions the \emph{decomposition functions of $E_p$ in $K$}. Note that these functions does not depend on the choice of the basis. Indeed, let $\sigma\in Gal(K/\Qp)$ (the Galois group of $K$ over $\Qp$) then
$$E_p(x\sigma(\alpha)^i)= c_{0,i}(x)+c_{1,i}(x)\sigma(\alpha)+\cdots + c_{d-1,i}(x)\sigma(\alpha)^{d-1}.$$
In fact, just like the function $\sin$ and $\cos$ in $\C$, the functions $c_i$ can be expressed as a polynomial combination of $E_p(x\sigma(\alpha))$. Indeed, from the above equalities, we find that:
$$
\left(\begin{array}{c}
c_{0,i}(x)\\
\vdots\\
c_{d-1,i}(x)
\end{array}\right)
=
V^{-1}
\left(\begin{array}{c}E_p(x\sigma_0(\alpha))\\
\vdots\\
E_p(x\sigma_{d-1}(\alpha))
\end{array}\right) (*),
$$
where $V$ is the Vandermonde matrix of the roots of the minimal polynomial of $\alpha$ and $\sigma_i$ are the elements of $Gal(K_d/\Qp)$.

\par We fix a tower of finite algebraic extensions $K_1\subset K_2\subset \cdots$ so that
\begin{itemize}
\item $K_n$ is the splitting field of $Q_n(X)$ polynomial of degree $N_n$ with coefficients in $\Q$;
\item $K_n= \Qp(\beta_n)$ for all $\beta_n$ root of $Q_n$;
\item any extension of degree $n$ is contained in $K_n$ and its valuation ring is contained in $\Zp[\beta_n]$.
\end{itemize}
Note that by Krasner lemma, such a family of extension exists. We denote by $c_{ijn}$ the decomposition functions of $E_p$ in $K_n$.  Let $\mathcal{L}_{pEC}=\mathcal{L}_{exp}\cup\{c_{ijn}:\ i<N_j, j,n\in \N \}$. We will say the functions $c_{ijn}$ are the \emph{trigonometric functions}.

Let $\Z_{pEC}$ be the structure with underlying set $\Zp$ and natural interpretations for the language $\mathcal{L}_{pEC}$. In \cite{Mariaule1}, it is proved that 
\begin{theorem}\label{effective model completeness} $Th(\Z_{pEC})$ is effectively strongly model-complete.
\end{theorem}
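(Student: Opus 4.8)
The plan is to run the model-completeness argument of Macintyre \cite{Macintyre4} while tracking effectivity at every step, which is the content of \cite{Mariaule1}. The starting observation is that $E_p$ and each trigonometric function $c_{ijn}$ is a \emph{computable} restricted $p$-adic analytic function: $E_p$ has rational power-series coefficients and converges on all of $\Zp$, and the $c_{ijn}$ are obtained from the functions $x\mapsto E_p(x\sigma_k(\beta_n))$ by the Vandermonde inversion $(*)$, an effective piece of linear algebra over the splitting field of the rational polynomial $Q_n$. Thus $\Z_{pEC}$ is an expansion of $\Zp$ by a computably presented family of restricted analytic functions, and it lives inside the framework of $p$-adic restricted analytic structures (Denef, van den Dries, and in the exponential case Macintyre \cite{Macintyre4}).

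First I would establish effective quantifier elimination for the expansion of $\Z_{pEC}$ by the division function $D$ and the power predicates $P_n$ (as in the language $\mathcal{L}_F$ of Section \ref{desingularization}), both of which are existentially definable over $\mathcal{L}_{pEC}$. This rests on an \emph{effective Weierstrass preparation and division theorem} over the valuation rings of the $K_n$: from a convergent series with computable coefficients that is regular of a known order in the last variable one can compute its Weierstrass polynomial and unit to any prescribed precision, with a termination criterion once the relevant valuations have stabilised. Feeding this into the cell-decomposition and induction-on-the-number-of-variables machinery, and verifying that each step (parametrised preparation, reparametrisation, bookkeeping of cells) stays computable, one obtains for every formula $\phi(\overline{x})$ in this language a quantifier-free $\psi(\overline{x})$ equivalent to it in $\Z_{pEC}$, computably from $\phi$, together with a computable bound on the complexity of $\psi$.

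Next I would eliminate $D$ and the $P_n$ at the cost of passing to existential formulas: $v(x)\ge v(y)$ is $\exists w\ x=wy$, $P_n(x)$ is $\exists y\ y^n=x$, and $z=D(x,y)$ is given by an explicit existential $\mathcal{L}_{pEC}$-formula; substituting these and pulling quantifiers out converts $\psi$ into an existential $\mathcal{L}_{pEC}$-formula $\exists\overline{y}\ \theta(\overline{x},\overline{y})$ with $\theta$ quantifier-free, still computable from $\phi$, and taking negations shows that every formula is also effectively equivalent to a universal one; hence $\Z_{pEC}$ is effectively model-complete. For the \emph{strong} form — by which I mean that the existential formula can be arranged so that its witnesses are non-singular solutions of a system of $\mathcal{L}_{pEC}$-equations whose Jacobian in the $\overline{y}$-variables does not vanish, so that the analytic Hensel's lemma (Theorem \ref{Hensel}) and the implicit function theorem govern them — I would write $\theta$ as a finite union of pieces $\{h_1=\dots=h_l=0\ \wedge\ g\neq 0\}$ (merging equations via $(u,v)=(0,0)\Leftrightarrow u^2+\pi v^2=0$ and distributing over disjunctions) and then apply the desingularization of Section \ref{desingularization} (Corollary \ref{cor Wilkie 5.1} with $F=\{E_p\}\cup\{c_{ijn}\}$; its Noetherianity hypothesis holds because any ring generated by finitely many such terms together with their subterms and derivatives is finitely generated as a ring — the derivative of $c_{ijn}$ is again a constant-coefficient linear combination of the $E_p(x\sigma_k(\beta_n))$ — hence Noetherian), uniformly over the cells of the decomposition, so that each piece of the solution set is presented as a non-singular zero set of $\mathcal{L}_{pEC}$-terms.

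The main obstacle is the effectivity bookkeeping inside the analytic quantifier-elimination step, and above all an effective Weierstrass division theorem over the fields $K_n$; one also has to keep the arithmetic along the tower $K_1\subset K_2\subset\cdots$ uniformly computable (the minimal polynomials $Q_n$, the conjugates $\sigma_k(\beta_n)$, hence the coefficients of the $c_{ijn}$). A secondary delicate point is that the desingularization of Section \ref{desingularization} is proved pointwise, so one must check it can be carried out definably and effectively in families — relative to the cell in which a parameter lies — and this is precisely where one uses that the differential rings of germs involved come with a computable Noetherian presentation.
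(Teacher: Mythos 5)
First, a point of comparison: the paper does not prove Theorem \ref{effective model completeness} at all --- it is imported verbatim from \cite{Mariaule1} --- so there is no internal proof to measure your argument against, and your sketch has to be judged on its own terms. Your overall architecture for the model-completeness part (effective Weierstrass division over the tower $K_1\subset K_2\subset\cdots$, an effective preparation/cell-decomposition induction, then conversion of the resulting quantifier-free formulas back into existential $\mathcal{L}_{pEC}$-formulas) is the right kind of argument, and you correctly identify why the functions $c_{ijn}$ are in the language. But the crux is hidden in your phrase ``feeding this into the cell-decomposition machinery'': Weierstrass division of one $\mathcal{L}_{pEC}$-term by another does not obviously return $\mathcal{L}_{pEC}$-terms --- the unit and the coefficients of the distinguished polynomial are a priori just elements of $\Zp\{X\}$ --- and showing that the relevant rings of terms are closed (up to passing to a $K_n$ and using its decomposition functions) under every preparation step is the main content of \cite{Macintyre4} and \cite{Mariaule1}, not the ``effectivity bookkeeping'' you relegate it to. As written, the step that actually carries the theorem is asserted rather than argued.

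The clearest genuine gap is in your treatment of the ``strong'' part. You propose to obtain non-singular presentations of the solution sets by applying the desingularization of Section \ref{desingularization} ``uniformly and effectively in families''. But Theorem \ref{Wilkie thm 5.1} is proved non-constructively: it uses Noetherianity of $M$ to pick a point $R$ with $I_R$ maximal, a non-effective choice of a maximal $m$, and generators of $I_R$ that are in no way computed. The paper says explicitly (in the introduction and again before Lemma \ref{lemma algo}) that no effective procedure is known for producing the system $P_1,\dots,P_n$, and that this is precisely why the $p$-adic Schanuel conjecture is needed. If your final step could be carried out as described, Algorithm \ref{existence of root} could simply compute the non-singular system instead of blindly enumerating candidates and the main theorem would hold unconditionally --- so your proposal, if correct, would short-circuit the entire paper. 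Either ``strongly model-complete'' does not mean what you take it to mean, or this step requires a genuinely different, constructive argument; in either case the last paragraph of your proposal cannot stand as written.
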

Therefore, if we are able to solve the decision problem for $\mathcal{L}_{pEC}$-existential sentences then the theory of $\Zpex$ is decidable.

\par Let $\varphi$ be an existential $\mathcal{L}_{pEC}$-sentence with $n$ quantifiers. Then, there is $N$ such that any term of the formula has the form $$f(\overline{x})=P(\overline{x},e^{p\overline{x}},c_{0,1,N}(\overline{x}),\cdots , c_{d_N-1,d_N-1,N}(\overline{x}))=:F_P(\overline{x})$$
 for some $P(\overline{x}, \overline{y}_0,\cdots ,\overline{y}_{L_N})\in \Z[\overline{x}, \overline{y}_0,\cdots ,\overline{y}_{L_N}]$ (where $L_N=d_N^2-d_N$). Let us remark that the ring generated by the exponential and trigonometric functions is closed under derivation. Therefore, we can apply theorem \ref{Wilkie thm 5.1}:
\par  Let $f$ be a $\mathcal{L}_{pEC}$-term. Assume that $V(f)\not=\varnothing$. Then, there exist $Q_1,\cdots , Q_n\in\Z[\overline{x}, \overline{y}_0,\cdots ,\overline{y}_{L_N}]$ such that $V^{ns}(F_{Q_1},\cdots F_{Q_{n}})\cap V(f)\not= \varnothing$. This implies that there is a root $\overline{a}$ of $f$ such that:
\begin{align*} 
 \mbox{trdeg}_\Q \Q(a_1,\cdots, a_n, e^{pa_1},\cdots , e^{pa_n}, c_{0,1,d_N}(a_1),\cdots , c_{d_N-1,d_N-1,N}(a_n)) \\
 =  \mbox{trdeg}_\Q \Q(a_1,\cdots, a_n, e^{pa_1}, \cdots, e^{pa_n},e^{pa_1\beta_N} \cdots , e^{pa_n\beta_N^{d_N-1}})\leq d_N\cdot n,
 \end{align*}
 where the first equality holds, as by (*), the trigonometric functions can be expressed as a polynomial combinations of $E_p(x\beta_N^k)$.
Let us remark that $1, \beta_N, \cdots ,\beta_N^{d_N-1}$ are $\Qp$-linearly independent. Using the $p$-adic Schanuel's conjecture, we find that the above relation is actually an equality (if $a_1,\cdots , a_n$ are $\Q$-linearly independent). With this, we prove as before:
\begin{lemma}\label{L_pec crucial lemma}
Let $n\geq 1$, $P\in\Z[\overline{X}, \overline{Y}_0,\cdots , \overline{Y}_{L_N}]$.	
\par Assume that $F_P = P(x_1,\cdots , x_n,e^{px_1},\cdots , e^{px_n},c_{0,1,N}(x_1),\cdots ,c_{N,d_N-1,d_N-1}(x_n))$ has a zero in $\Zp$ and that the components of any zero of $F_P$ are $\Q$-linearly independent.
\par Then, there exist $a_1,\cdots, a_{n}\in \Zp$ and $R,R_1,\cdots ,R_{T_N},S_1,\cdots S_{T_N}\in \Z[\overline{X}, \overline{Y}_0,\cdots , \overline{Y}_{L_N}]$ (where $T_N=(L_N-d_N+2)\cdot n$) such that $\overline{a}$ is a zero of $F_P$ and a non-singular zero of a subsystem of $G=(F_{R_1},\cdots, F_{R_{T_N}})$, that $F_R(\overline{a})\not=0$ and that $RP=\sum_i R_iS_i$.
\end{lemma}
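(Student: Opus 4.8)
The plan is to mimic the proof of Lemma~\ref{lemma algo} line by line, replacing the pair $(x_i,e^{px_i})$ by the tuple formed by $x_i$, its exponential, and the $L_N$ trigonometric values $c_{k,l,N}(x_i)$ --- equivalently, via the relation $(*)$, by $x_i$ together with the $d_N$ quantities $e^{px_i\beta_N^j}$, $0\le j\le d_N-1$. First I would run the desingularization step exactly as in the paragraph preceding the statement: since $F_P$ has a zero, Theorem~\ref{Wilkie thm 5.1} (applied with $U=\Zp$, $S=V(F_P)$, and $M$ the ring generated over $\Z[\overline{X}]$ by the exponential and trigonometric terms and their derivatives, which is Noetherian and closed under differentiation as noted in the excerpt) produces $Q_1,\dots,Q_n\in\Z[\overline{X},\overline{Y}_0,\dots,\overline{Y}_{L_N}]$ and a point $\overline{a}\in\Zp^n$ lying in $V^{ns}(F_{Q_1},\dots,F_{Q_n})\cap V(F_P)$. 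In particular the vectors $d_{\overline{a}}F_{Q_1},\dots,d_{\overline{a}}F_{Q_n}$ are $\Qp$-linearly independent, and since $\overline{a}$ is a zero of $F_P$, its components are $\Q$-linearly independent by hypothesis.

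Next comes the single use of the $p$-adic Schanuel conjecture, which is essentially the computation carried out just before the lemma. The relation $(*)$ identifies the field $\Q(a_1,\dots,a_n,e^{pa_1},\dots,e^{pa_n},c_{0,1,N}(a_1),\dots,c_{d_N-1,d_N-1,N}(a_n))$ --- up to adjoining the algebraic number $\beta_N$ --- with $\Q(a_1,\dots,a_n,e^{pa_1\beta_N^0},\dots,e^{pa_n\beta_N^{d_N-1}})$, whose transcendence degree over $\Q$ is at most $nd_N$, the non-singularity of the $Q$-system trimming the $n(d_N+1)$ obvious generators by $n$. The conjecture forces equality: the $nd_N$ numbers $a_k\beta_N^j$ are $\Q$-linearly independent, since the $a_k$ are, since $1,\beta_N,\dots,\beta_N^{d_N-1}$ are $\Qp$-linearly independent, and since the $a_k$ lie in $\Qp$; each of the corresponding exponents has valuation $>1/(p-1)$ after multiplying by $p$ (resp.\ $4$). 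Hence $\text{trdeg}_\Q$ of that field equals $r:=nd_N$.

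Then I would apply Claim~\ref{ideal generators} to the prime ideal $I=\{h\in\Z[\overline{X},\overline{Y}_0,\dots,\overline{Y}_{L_N}]:h(\xi)=0\}$, where $\xi=(a_1,\dots,a_n,e^{pa_1},\dots,e^{pa_n},c_{0,1,N}(a_1),\dots)$ and $m=n(L_N+2)$ is the number of variables of $P$; note that $I\cap\Z=\{0\}$ and $\text{trdeg}_\Q\,\mathrm{Frac}(\Z[\overline{X},\overline{Y}]/I)=r$. The claim yields $R\notin I$ such that $RI$ is generated by $m-r=n(L_N+2-d_N)=(L_N-d_N+2)\,n=T_N$ elements $R_1,\dots,R_{T_N}$. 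Since $R\notin I$ we get $F_R(\overline{a})=R(\xi)\ne 0$; since each $R_i\in RI\subseteq I$ we get $F_{R_i}(\overline{a})=0$; and since $F_P(\overline{a})=0$ forces $P\in I$, so $RP\in RI$, we get $RP=\sum_i R_iS_i$ for suitable $S_i$. For the final assertion, that $\overline{a}$ is a non-singular zero of some $n$-element subsystem of $G=(F_{R_1},\dots,F_{R_{T_N}})$, I would argue as at the end of Lemma~\ref{lemma algo}: each $Q_i\in I$, so $RQ_i=\sum_k S_{ik}R_k$; differentiating the exponential-polynomial identity $F_RF_{Q_i}=\sum_k F_{S_{ik}}F_{R_k}$ in $x_j$ and evaluating at $\overline{a}$, where $F_{Q_i}(\overline{a})=F_{R_k}(\overline{a})=0$, gives $F_R(\overline{a})\,\partial_{x_j}F_{Q_i}(\overline{a})=\sum_k F_{S_{ik}}(\overline{a})\,\partial_{x_j}F_{R_k}(\overline{a})$; as $F_R(\overline{a})\ne 0$, each $d_{\overline{a}}F_{Q_i}$ lies in the span of $d_{\overline{a}}F_{R_1},\dots,d_{\overline{a}}F_{R_{T_N}}$, and since the $d_{\overline{a}}F_{Q_i}$ ($1\le i\le n$) are independent, the family $(d_{\overline{a}}F_{R_k})_k$ has rank $\ge n$; choosing $n$ of the $R_k$ with independent gradients exhibits $\overline{a}$ as a non-singular zero of that subsystem.

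The hard part is not conceptual but bookkeeping: one must pin down the variable count $m=n(L_N+2)$ so that Claim~\ref{ideal generators} outputs exactly $T_N=m-r$ generators, and one must be scrupulous about the $\Q$-linear-independence input to the $p$-adic Schanuel conjecture --- the one place that genuinely uses $a_i\in\Qp$ and the $\Qp$-independence of $1,\beta_N,\dots,\beta_N^{d_N-1}$ rather than independence over a larger field. The only structural departure from Lemma~\ref{lemma algo} is that desingularization supplies $n$ polynomials $Q_i$ while the claim supplies $T_N>n$ polynomials $R_k$, so the conclusion is a non-singular \emph{subsystem} of $G$ rather than a full non-singular system, which is exactly why the statement is phrased that way.
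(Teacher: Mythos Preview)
Your proof is correct and follows the paper's own argument essentially line by line: desingularize via Theorem~\ref{Wilkie thm 5.1} to obtain $Q_1,\dots,Q_n$ and $\overline{a}$, invoke the $p$-adic Schanuel conjecture to pin the transcendence degree at $r=nd_N$, apply Claim~\ref{ideal generators} with $m=n(L_N+2)$ to produce $R,R_1,\dots,R_{T_N}$, and finish with the chain-rule computation from Lemma~\ref{lemma algo} to extract a non-singular $n$-element subsystem. (Your variable count $m=n(L_N+2)$ is the one consistent with $T_N=m-r$; the paper's printed value $m=(L_N^2+2)\cdot n$ is a typo.)
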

\begin{proof}
We apply claim \ref{ideal generators} with 
$$I= \{h\in\Z[\overline{X}, \overline{Y}_0,\cdots , \overline{Y}_{L_N}]\mid\ h(\overline{a},E_p(\overline{a}),c_{0,1,N}(\overline{a}),\cdots , c_{d_N-1,d_N-1,N}(\overline{a}))=0\},$$ $m=(L_N^2+2)\cdot n$ and $r=d_N\cdot n$. Then, by the claim, there exist $R\not\in I$,  $R_1,\cdots , R_{T_N}$ generators of $RI$ (where $T_N=m-r$) and $S_1,\cdots , S_{T_N}$ such that $RP=\sum R_iS_i$. Also, as $Q_i\in I$ for all $i$, like in lemma \ref{lemma algo}, it implies that $\overline{a}\in V^{ns}(F_{\widetilde{R_1}},\cdots , F_{\widetilde{R_n}})$ for some $\widetilde{R_1}\cdots ,\widetilde{R_n}\in \{R_1,\cdots , R_{T_N}\}$.
\end{proof}

If we are given $\varphi$ an existential $\mathcal{L}_{pEC}$-sentence of the form:
$$\exists x_1,\cdots , x_n F_P(\overline{x})=0\wedge \bigwedge_i F_{A_i}(\overline{x})\not=0,$$
it is quite easy to adapt the algorithm \ref{existence of root} to construct an algorithm that returns yes if the sentence is true in $\Zp$ (and never stops otherwise):
\begin{enumerate}
	\item Enumerate all $R,R_1,\cdots ,R_{T_N},S_1,\cdots S_{T_N}$ and $B= \overline{a}+p^k\Zp^n$.
	\item If $RP= \sum R_iS_i$, check if a subsystem $\widetilde{R_1}\cdots ,\widetilde{R_n}$ has a unique non-singular root in $B$ using Hensel's lemma.
	\item If this is the case, use the algorithm \ref{existence of root} to determine if the following formula is true in $V_N$:
	$$\exists \overline{x}\ \overline{x} \in B \wedge \overline{x} \in V^{ns}(\widetilde{R_1}\cdots ,\widetilde{R_n})\wedge \overline{x} \in V(R_1,\cdots ,R_{T_N}). $$
	We use the version of algorithm \ref{existence of root} for formulas in $K_N$ and in the above formula, we replace the trigonometric functions by their polynomial expression in exponential terms. Note that this procedure never stops if the above formula is false but it doesn't matter.
	\item If the above formula is true, then the system $R_1,\cdots ,R_{T_N}$ has a root in $\Zp^n\cap B$. So, $F_P$ has a root in $\Zp^n\cap B$. It remains to check that $F_{A_i}$ and $F_R$ have no root in $B$. If this is the case, $\varphi$ is true.
\end{enumerate}
Now, we use theorem \ref{effective model completeness} to obtain a sentence $\psi$ equivalent to the negation of the sentence $\varphi$. Surely, our algorithm stops either for $\varphi$ or $\psi$. We can therefore determine the truth value of $\varphi$ in $\Zp$ by running in parallel the algorithm for $\varphi$ and $\psi$.
\par The main theorem follows:
\begin{theorem}\label{main theorem}
 Assume that the $p$-adic version of Schanuel's conjecture is true. Then, the theory of $\Z_{pEC}$ is decidable.
\end{theorem}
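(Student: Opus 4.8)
The plan is to reduce, via Theorem \ref{effective model completeness}, the decidability of $Th(\Z_{pEC})$ to a semi-decision procedure for the truth of existential $\mathcal{L}_{pEC}$-sentences, and then to assemble such a procedure from the desingularization theorem, the analytic Hensel's lemma, and Lemma \ref{L_pec crucial lemma}. Given any $\mathcal{L}_{pEC}$-sentence $\varphi$, effective strong model-completeness produces an existential $\mathcal{L}_{pEC}$-sentence $\psi$ equivalent to $\neg\varphi$. Hence it suffices to exhibit an algorithm which, on input an existential $\mathcal{L}_{pEC}$-sentence, halts and returns \textbf{true} precisely when that sentence holds in $\Z_{pEC}$: running it in parallel on $\varphi$ and on $\psi$, exactly one run terminates, and that decides $\varphi$.

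For the semi-decision procedure, write the input (effectively) as a disjunction of sentences $\exists\overline{x}\ F_P(\overline{x})=0\wedge\bigwedge_i F_{A_i}(\overline{x})\neq 0$ and treat one disjunct. As in Proposition \ref{positive existential theory}, I would first reduce to the case where the components of every zero of $F_P$ are $\Q$-linearly independent, by enumerating candidate $\Z$-linear relations among $x_1,\dots,x_n$, using each to eliminate a variable, and recursing; this terminates whenever $F_P$ has a zero. Under the independence hypothesis, Lemma \ref{L_pec crucial lemma} — whose proof uses the $p$-adic Schanuel conjecture to force $\mathrm{trdeg}_\Q$ of the field generated by a root together with its exponential and trigonometric values up to $d_N\cdot n$, so that Claim \ref{ideal generators} applies with the correct number of generators — guarantees, whenever $F_P$ has a zero, polynomials $R,R_1,\dots,R_{T_N},S_1,\dots,S_{T_N}$ with $RP=\sum_i R_iS_i$ and a point $\overline{a}$ that is a zero of $F_P$, a non-singular zero of some $n$-element subsystem of $(F_{R_1},\dots,F_{R_{T_N}})$, and with $F_R(\overline{a})\neq 0$.

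The algorithm then enumerates all such tuples $(R,R_1,\dots,R_{T_N},S_1,\dots,S_{T_N})$ together with all basic open sets $B=\overline{b}+p^k\Zp^n$, $\overline{b}\in\N^n$, $k\in\N$. For each, it checks the polynomial identity $RP=\sum_i R_iS_i$ (decidable since $e^p$ is transcendental over $\Q$, cf. the remarks after Lemma \ref{universal formula stop}); uses the analytic Hensel's lemma (Theorem \ref{Hensel}) to test whether some $n$-element subsystem $\widetilde{R_1},\dots,\widetilde{R_n}$ has a unique non-singular root in $B$; passing to the splitting field $K_N$ and replacing each trigonometric function $c_{ijN}$ by its polynomial expression in the exponentials $E_p(x\beta_N^k)$ via $(*)$, invokes the version of Algorithm \ref{existence of root} for $\mathcal{O}_{K_N}$ to verify that this non-singular root lies in $V(R_1,\dots,R_{T_N})$ — hence, by $RP=\sum_i R_iS_i$ and the non-vanishing of $F_R$ on $B$, is a genuine zero of $F_P$; and finally, using Algorithm \ref{no root}, checks that none of $F_R,F_{A_1},\dots$ vanishes on $B$. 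If all tests pass it returns \textbf{true}. Conversely, if the sentence is true, Lemma \ref{L_pec crucial lemma} together with the density of $\N^n$ in $\Zp^n$ ensures that some tuple and some $B$ pass every test, so the procedure halts.

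The main obstacle, and the unique place where the Schanuel hypothesis is used, is the step inside Lemma \ref{L_pec crucial lemma} that upgrades ``$F_P$ possesses a non-singular desingularizing system $G$'' to ``$P$ lies, up to the non-vanishing factor $R$, in the ideal generated by the $R_i$''. Without the transcendence-degree equality, Claim \ref{ideal generators} cannot be applied with the correct number $m-r$ of generators, and one loses the guarantee that the zero produced by Hensel's lemma — which only sees the auxiliary system — is actually a zero of the original $F_P$; the desingularization theorem (Theorem \ref{Wilkie thm 5.1}) by itself does not render $G$ computable. Every other ingredient (the model-completeness reduction, the removal of $\Z$-linear dependencies, the Hensel and no-root tests, the rewriting of the trigonometric functions) is effective by the results already established above.
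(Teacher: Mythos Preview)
Your proof is correct and follows essentially the same route as the paper: reduce via effective strong model-completeness to a semi-decision procedure for existential $\mathcal{L}_{pEC}$-sentences, then build that procedure by enumerating candidate data for Lemma \ref{L_pec crucial lemma}, testing with Hensel's lemma, passing to $K_N$ via $(*)$ to invoke Algorithm \ref{existence of root}, and finishing with Algorithm \ref{no root} on $F_R$ and the $F_{A_i}$. Your explicit mention of the $\Q$-linear-independence reduction and your concluding paragraph isolating the exact role of the Schanuel hypothesis are useful clarifications the paper leaves more implicit; one minor remark is that the check $RP=\sum_i R_iS_i$ is a purely polynomial identity over $\Z$, so its decidability does not require Mahler's theorem on $e^p$.
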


\begin{Remark}
\begin{itemize}
	\item By the remark after proposition \ref{positive existential theory}, it is not hard to extend the above theorem to finite algebraic extensions of $\Qp$.
	\item In the above theorem, we actually use the $p$-adic Schanuel's conjecture for point in the algebraic closure of $\Qp$ (i.e. we don't need the case where the $\alpha_i$'s are proper points of $\Cp$).
	\item For sentences in one variable, one can solve the decision problem unconditionally (see \cite{MyThesis}).
\end{itemize}
\end{Remark}

\bibliographystyle{plain}
\bibliography{Biblio_p-adic}

\end{document}